\newcommand{\runningtitle}{Running Title}
\newtheorem{thm}{Theorem}
\newtheorem{lem}[thm]{Lemma}
\newtheorem{prop}[thm]{Proposition}
\newtheorem{cor}[thm]{Corollary}
\theoremstyle{definition}
\newtheorem{crit}[thm]{Criterion}
\newtheorem*{prob*}{Problem}
\theoremstyle{remark}
\newtheorem{rmk}[thm]{Remark}
\newcommand{\ZZ}{\mathbb Z}              
\newcommand{\RR}{\mathbb R}              
\newcommand{\CC}{\mathbb C}              
\renewcommand{\Re}{\operatorname*{Re}} 
\newcommand{\D}{\ensuremath{\,\mathrm{d}}}
\newcommand{\ri}{\ensuremath{\mathrm{i}}}
\newcommand{\re}{\ensuremath{\mathrm{e}}}
\newcommand{\la}{\ensuremath{\lambda}}
\renewcommand{\epsilon}{\varepsilon}
\renewcommand{\geq}{\geqslant}
\renewcommand{\leq}{\leqslant}
\providecommand{\clos}{\operatorname{clos}}
\newcommand{\abs}[1]{\left\lvert#1\right\rvert}
\newcommand{\sgn}{\operatorname{sgn}}
\newcommand{\Mspacer}{\;} 
\newcommand{\Msup}[4]{#1_{#2\Mspacer#3}^{#4}} 
\newcommand{\Msups}[5]{#1_{#2\Mspacer#3}^{#4\Mspacer#5}} 
\newcommand\reallywidecheck[1]{%
\savestack{\tmpbox}{\stretchto{%
  \scaleto{%
    \scalerel*[\widthof{\ensuremath{#1}}]{\kern-.6pt\bigwedge\kern-.6pt}%
    {\rule[-\textheight/2]{1ex}{\textheight}}
  }{\textheight}%
}{0.5ex}}%
\stackon[1pt]{#1}{\scalebox{-1}{\tmpbox}}%
}
\newcommand\reallywidehat[1]{%
\savestack{\tmpbox}{\stretchto{%
  \scaleto{%
    \scalerel*[\widthof{\ensuremath{#1}}]{\kern-.6pt\bigwedge\kern-.6pt}%
    {\rule[-\textheight/2]{1ex}{\textheight}}
  }{\textheight}%
}{0.5ex}}%
\stackon[1pt]{#1}{\tmpbox}%
}
\newcommand{\AckYNCSRP}[1]{#1 gratefully acknowledges support from Yale-NUS College summer research programme.}
\newcommand{\AckYNCSeed}[1]{#1 gratefully acknowledges support from Yale-NUS College seed grant IG21-SG101.}
\newcommand{\bigoh}{\mathcal{O}}
\newcommand{\lindecayla}{\bigoh\left(\la^{-1}\right)}
\numberwithin{equation}{section}
\providecommand{\argdot}{{}\cdot{}}
\author{
    B. Normatov\textsuperscript{\textasteriskcentered} and D. A. Smith\textsuperscript{\textasteriskcentered\textdagger} \\
    \footnotesize\textsuperscript{\textasteriskcentered} Division of Science, Yale-NUS College, Singapore \\ \footnotesize\textsuperscript{\textdagger} Department of Mathematics, National University of Singapore Singapore, \\
    \footnotesize\href{mailto:dave.smith@yale-nus.edu.sg}{\texttt{dave.smith@yale-nus.edu.sg}}
}
\title{The Airy equation with nonlocal conditions}
\renewcommand{\runningtitle}{Airy nonlocal}
\date{\today}
\begin{document}
\maketitle
\thispagestyle{fancy}

\begin{abstract}
    We study a third order dispersive linear evolution equation on the finite interval subject to an initial condition and inhomogeneous boundary conditions but, in place of one of the three boundary conditions that would typically be imposed, we use a nonlocal condition, which specifies a weighted integral of the solution over the spatial interval.
    Via adaptations of the Fokas transform method (or unified transform method), we obtain a solution representation for this problem.
    We also study the time periodic analogue of this problem, and thereby obtain long time asymptotics for the original problem with time periodic boundary and nonlocal data.
\end{abstract}

\section{Introduction} \label{sec:Introduction}

We study the following third order initial nonlocal value problem (INVP).
\begin{prob*}[Finite interval problem for the Airy equation with one nonlocal condition]
    \begin{subequations} \label{eqn:INVPwithBC}
    \begin{align}
        \label{eqn:INVPwithBC.PDE} \tag{\theparentequation.PDE}
        [\partial_t+\partial_x^3] u(x,t) &= 0 & (x,t) &\in (0,1) \times (0,\infty), \\
        \label{eqn:INVPwithBC.IC} \tag{\theparentequation.IC}
        u(x,0) &= U(x) & x &\in[0,1], \\
        \label{eqn:INVPwithBC.BC0} \tag{\theparentequation.BC0}
        u(1,t) &= h_0(t) & t &\in[0,\infty), \\
        \label{eqn:INVPwithBC.BC1} \tag{\theparentequation.BC1}
        u_x(1,t) &= h_1(t) & t &\in[0,\infty), \\
        \label{eqn:INVPwithBC.NC} \tag{\theparentequation.NC}
        \int_0^1 K(y)u(y,t)\D y &= h_2(t) & t &\in[0,\infty),
    \end{align}
    in which $U$, $h_k$, and $K$ are sufficiently smooth and $U$ is compatible with $h_k$ and $K$ in the sense that the boundary and nonlocal conditions hold when $u(\argdot,t)$ is replaced with $U$ on the left and the right sides are evaluated at $t=0$.
    \end{subequations}
\end{prob*}

We consider this problem to be of primarily theoretical interest in understanding the applicability of the Fokas transform method~\cite{Fok2008a} to problems with nonlocal conditions.
Indeed, problem~\eqref{eqn:INVPwithBC} is a third order generalization of the INVP for the heat equation already studied by one of the authors~\cite{MS2018a}, but this problem is of higher spatial order and is dispersive instead of dissipative.

The Fokas transform method (unified transform method) was developed by Fokas and collaborators in the late 1990s and early 2000s, initially as an inverse scattering transform method for integrable nonlinear equations on domains with a spatial boundary.
However, it was soon understood that a version of the method for linear evolution equations yielded novel results, including the solution of the finite interval initial boundary value problem (IBVP) for the Airy equation~\eqref{eqn:INVPwithBC.PDE}~\cite{Pel2005a}.
The method works by constructing an integral transform pair tailored specifically to the particular IBVP of interest.
The derivation requires a complex contour deformation applied to the exponential Fourier inversion theorem, followed by the implementation in spectral space of a map from boundary data to unknown boundary values, a D to N map.
See~\cite{DTV2014a} for a good pedagogical introduction to the Fokas transform method, including its application to IBVP for equation~\eqref{eqn:INVPwithBC.PDE}.
The Fokas transform method owes its success to the diagonalization property of the transform pair.
The transforms diagonalize the IBVP's spatial differential operator in a weaker sense than does the Fourier sine transform diagonalize the half line Dirichlet heat operator, but in a sense that is suficient to enable the integral transform method to work.
See~\cite{Smi2023a} for a fuller explanation of the method's applicability from the point of view of classical spectral theory.

In the field of linear evolution equations, the domain of applicability of the Fokas transform method has been extended to problems of higher spatial order~\cite{FP2001a}, problems with general linear two point boundary conditions~\cite{Smi2012a,FS2016a,PS2016a,ABS2022a}, problems for systems of equations~\cite{FP2005b,DGSV2018a,CGM2021a}, and problems with mixed partial derivatives~\cite{DV2013a}.
In the past decade, there has been interest in problems with more complex kinds of boundary conditions, such as interface problems~\cite{APSS2015a,DPS2014a,DS2015a,She2017a,DSS2016a}, oblique Robin problems~\cite{Man2012a}, multipoint problems~\cite{PS2018a}, and general interface problems on networks~\cite{SS2015a,ABS2022a}.
Some of this work reproduced earlier results, often with alternative solution representations more amenable to numerical evaluation, but many of the results were novel, especially for equations of third or higher spatial order.
As multipoint conditions can represent a discretized weighted mean of $u$ over the spatial interval, it is natural to ask whether their continuous analogue, nonlocal conditions such as~\eqref{eqn:INVPwithBC.NC}, may also be treated via the Fokas transform method.
In~\cite{MS2018a}, it was shown that this can indeed be done for the heat equation, but the question remained open for equations of higher spatial order or dispersive equations.
This work addresses both of those cases.

Specifically for the dispersive third order INVP~\eqref{eqn:INVPwithBC}, we obtain three main results:
\begin{enumerate}
    \item[(i)]{in theorem~\ref{thm:FullSoln.OneNonlocal.Solution.D}, a solution representation;}
    \item[(ii)]{in prop~\ref{prop:LongTime.HomogeneousINVP.LinDec}, long time decay of the solution where all boundary and nonlocal conditions are homogeneous;}
    \item[(iii)]{and, in theorem~\ref{thm:LongTime.MainResult}, long time asymptotics for the solution in the case that all boundary and nonlocal conditions are periodic.}
\end{enumerate}
In service of the first of these aims,~\S\ref{sec:FullSoln} is dedicated to an implementation of the Fokas transform method, with adaptations appropriate to this problem.
In~\S\ref{sec:Periodic}, we study a problem related to problem~\eqref{eqn:INVPwithBC} in which the data are time periodic with common period, but the initial condition is replaced with a time periodicity condition on $u$.
We use the ``$Q$ equation'' method~\cite{FL2012b,FvdW2021a,FPS2022a} to obtain a solution representation and criteria on its validity.
The greater part of~\S\ref{sec:LongTime} is dedicated to an asymptotic analysis of the solution representation for $u$, whereby the long time decay result is derived.
We then use the principle of linear superposition to decompose $u$ into a part satisfying the periodic problem and another part satisfying the homogeneous problem, obtaining the final theorem.

\section{Solution of the Airy equation with nonlocal conditions} \label{sec:FullSoln}

In this section, we derive an explicit integral representation of the solution of problem~\eqref{eqn:INVPwithBC}.
We use the Fokas transform method, adapting the arguments presented in~\cite{MS2018a}.

\subsection{Global relations and Ehrenpreis form}

Suppose that $u$ is a sufficiently smooth function satisfying the partial differential equation~\eqref{eqn:INVPwithBC.PDE}.
For $j\in\{0,1,2\}$, $y\in[0,1]$, and $\la\in\CC$, define
\begin{equation} \label{eqn:defn.f}
    f_j(\la;y,t) = \int_0^t \re^{-\ri\la^3 s} \partial_x^j u(y,s) \D s.
\end{equation}
In the case that $y\in\{0,1\}$, $f_j(\la;y,t)$ represents a time transform of a boundary value, which we refer to as a \emph{spectral boundary value}.
We use a hat $\hat\argdot$ to signify the spatial exponential Fourier transform of function $\argdot$ so that, for example,
\[
    \hat{u}(\la;t) = \int_0^1 \re^{-\ri\la x} u(x,t) \D x.
\]
Note that, in this Fourier transform, the zero extension of $u$ is taken beyond its spatial domain of definition.
Fix $0\leq y<z\leq 1$ and denote the exponential Fourier transform of the $[y,z]$ restriction of a function by appending the arguments $y,z$.
For example,
\[
    \hat{u}(\la;t;y,z) = \int_y^z \re^{-\ri\la x} u(x,t) \D x.
\]

We apply the $[y,z]$ restricted spatial Fourier transform to equation~\eqref{eqn:INVPwithBC.PDE} to obtain
\[
    0 = \reallywidehat{\left[u_t+u_{xxx}\right]}(\la;t;y,z).
\]
By linearity of the Fourier transform, smoothness of $u$ in time, and spatial integration by parts, we obtain
\begin{multline} \label{eqn:PreGR}
    0
    = \left[ \partial_t - \ri\la^3 \right] \hat{u}(\la;t;y,z)
    + \re^{-\ri\la z} \left[ \partial_{xx} u(z,t) + \ri\la\partial_xu(z,t) - \la^2u(z,t) \right] \\
    - \re^{-\ri\la y} \left[ \partial_{xx} u(y,t) + \ri\la\partial_xu(y,t) - \la^2u(y,t) \right].
\end{multline}

Suppose further that $u$ satisfies the initial condition~\eqref{eqn:INVPwithBC.IC}.
Solving the temporal first order linear ordinary differential equation~\eqref{eqn:PreGR}, we derive
\begin{multline} \label{eqn:GR}
    \re^{-\ri\la^3t} \hat{u}(\la;t;y,z)
    = \widehat{U}(\la;y,z)
    + \re^{-\ri\la y} \left[ f_2(\la;y,t) + \ri\la f_1(\la;y,t) - \la^2 f_0(\la;y,t) \right] \\
    - \re^{-\ri\la z} \left[ f_2(\la;z,t) + \ri\la f_1(\la;z,t) - \la^2 f_0(\la;z,t) \right],
\end{multline}
an equation we refer to as the \emph{global relation on $[y,z]$}.

To the global relation on $[0,1]$ we apply the inverse Fourier transform to find that
\begin{multline} \label{eqn:PreEhrenpreisForm}
    2\pi u(x,t)
    = \int_{-\infty}^\infty \re^{\ri\la x + \ri\la^3 t} \widehat{U}(\la) \D\la
    + \int_{-\infty}^\infty \re^{\ri\la x + \ri\la^3 t} \left[ f_2(\la;0,t) + \ri\la f_1(\la;0,t) - \la^2 f_0(\la;0,t) \right] \D\la \\
    - \int_{-\infty}^\infty \re^{\ri\la(x-1) + \ri\la^3 t} \left[ f_2(\la;1,t) + \ri\la f_1(\la;1,t) - \la^2 f_0(\la;1,t) \right] \D\la.
\end{multline}
Note that the integrals in equation~\eqref{eqn:PreEhrenpreisForm} are properly interpreted as Cauchy principal values, as they represent inverse Fourier transforms, and we cannot expect that $U$ or any of the other functions may be extended continuously by zero to $\RR$.
This Cauchy principal value interpretation of integrals shall be tacitly maintained for all formulae derived from equation~\eqref{eqn:PreEhrenpreisForm}.

We define the regions, each comprising a union of one or two open sectors,
\[
    D^\pm = \left\{ \la \in\CC^\pm: \Re(-\ri\la^3)<0 \right\}, \qquad\qquad E^\pm = \left\{ \la \in\CC^\pm: \Re(-\ri\la^3)>0 \right\},
\]
and adopt the convention that their boundaries, unions of rays $\re^{\ri j \pi/3}[0,\infty)$ for integer $j$, are oriented so that the region lies to the left of the ray.
We define further, for $R>0$,
\begin{equation} \label{eqn:defnDpmEpm}
    D^\pm_R = \left\{ \la \in D^\pm: \abs{\la}>R \right\}, \qquad\qquad E^\pm_R = \left\{ \la \in E^\pm: \abs{\la}>R \right\},
\end{equation}
also with their boundaries oriented so that the regions lie to the left.
Integration by parts in the definition of $f_j$ establishes that
\[
    \re^{\ri\la^3 t} \left[ f_2(\la;0,t) + \ri\la f_1(\la;0,t) - \la^2 f_0(\la;0,t) \right] = \lindecayla,
\]
uniformly in $\arg(\la)$, as $\la\to\infty$ within $\clos(E^+)$.
Hence, by Jordan's lemma and Cauchy's theorem,
\[
    \int_{\partial E^+} \re^{\ri\la x + \ri\la^3 t} \left[ f_2(\la;0,t) + \ri\la f_1(\la;0,t) - \la^2 f_0(\la;0,t) \right] \D\la = 0,
\]
and the second integral of equation~\eqref{eqn:PreEhrenpreisForm} may have its contour deformed from the real line to $\partial D_R^+$, for any $R>0$.
Similarly, the third integral of equation~\eqref{eqn:PreEhrenpreisForm} may have its contour deformed from the real line to $\partial D_R^-$, but in the opposite direction.
Therefore,
\begin{multline} \label{eqn:EhrenpreisForm.t}
    2\pi u(x,t)
    = \int_{-\infty}^\infty \re^{\ri\la x + \ri\la^3 t} \widehat{U}(\la) \D\la
    + \int_{\partial D_R^+} \re^{\ri\la x + \ri\la^3 t} \left[ f_2(\la;0,t) + \ri\la f_1(\la;0,t) - \la^2 f_0(\la;0,t) \right] \D\la \\
    + \int_{\partial D_R^-} \re^{\ri\la(x-1) + \ri\la^3 t} \left[ f_2(\la;1,t) + \ri\la f_1(\la;1,t) - \la^2 f_0(\la;1,t) \right] \D\la.
\end{multline}
By a similar Jordan's lemma argument, for all $j\in\{0,1,2\}$ and all $t'>t$,
\[
    \int_{\partial D_R^+} \re^{\ri\la x + \ri\la^3 t} \la^{2-j} \Big[ f_j(\la;0,t') - f_j(\la;0,t) \Big] \D\la = 0,
\]
and similarly for the integral over $\partial D_R^-$.
Therefore, equation~\eqref{eqn:EhrenpreisForm.t} may alternatively be expressed with any $t' \geq t$ substituted for each $t$ appearing as an argument of an $f_j$, but keeping the $t$ in the exponential kernels unchanged:
\begin{multline} \label{eqn:EhrenpreisForm}
    2\pi u(x,t)
    = \int_{-\infty}^\infty \re^{\ri\la x + \ri\la^3 t} \widehat{U}(\la) \D\la \\
    + \int_{\partial D_R^+} \re^{\ri\la x + \ri\la^3 t} \left[ f_2(\la;0,t') + \ri\la f_1(\la;0,t') - \la^2 f_0(\la;0,t') \right] \D\la \\
    + \int_{\partial D_R^-} \re^{\ri\la(x-1) + \ri\la^3 t} \left[ f_2(\la;1,t') + \ri\la f_1(\la;1,t') - \la^2 f_0(\la;1,t') \right] \D\la,
\end{multline}
which is known as the \emph{Ehrenpreis form}.
This is particularly convenient for efficiency of numerical computation when studying problem~\eqref{eqn:INVPwithBC} on a finite time interval, using $t'$ the final time, but may be inappropriate for studying long time asymptotics of solutions because $h_k$ need not be absolutely integrable on $[0,\infty)$.

Thusfar, because we have not made use of any boundary or nonlocal conditions, the results obtained above are similar to those one requires in the study of IBVP for the Airy equation on the finite interval.
The only difference is that the global relation~\eqref{eqn:GR} is presented for general subintervals of the spatial interval $[0,1]$.
This slight generalization of the global relation will be crucial in the following arguments.

\subsection{D to N map}
Equation~\eqref{eqn:EhrenpreisForm} is not an effective solution representation, because we do not have expressions for any of the six spectral boundary values $f_j$.
Note that this issue appears even in the case of IBVP;
any wellposed two point IBVP for the Airy equation must specify exactly three of the six $f_j$ appearing in the Ehrenpreis form~\eqref{eqn:EhrenpreisForm} or, more generally, exactly three linear combinations of the six.
One must, whether studying an INVP or an IBVP construct a map from the data $h_k$ to the presently unknown spectral boundary values $f_j$.
The fact that problem~\eqref{eqn:INVPwithBC} features a nonlocal condition in place of a boundary condition adds complexity to this D to N map, but is not the reason that such a map is required.

We seek expressions for each of the six spectral boundary values
\begin{equation}
    f_0(\la;0,t'), \qquad f_1(\la;0,t'), \qquad f_2(\la;0,t'), \qquad f_0(\la;1,t'), \qquad f_1(\la;1,t'), \qquad f_2(\la;1,t')
\end{equation}
in problem~\eqref{eqn:INVPwithBC}.
We apply the time transform
\[
    \phi(s) \mapsto \int_0^{t'} \re^{-\ri\la^3 s} \phi(s) \D s
\]
that was used in equation~\eqref{eqn:defn.f} to the boundary conditions~\eqref{eqn:INVPwithBC.BC0}--\eqref{eqn:INVPwithBC.BC1} to obtain expressions for two of these:
\begin{equation}
    f_0(\la;1,t') = \int_0^{t'} \re^{-\ri\la^3 s} h_0(s) \D s =: \tilde{h}_0(\la;t'),
    \qquad
    f_1(\la;1,t') = \int_0^{t'} \re^{-\ri\la^3 s} h_1(s) \D s =: \tilde{h}_1(\la;t').
\end{equation}

Let $\alpha=\re^{\ri2\pi/3}$, a primitive cube root of unity.
Observe that each of the functions $f_j$ are symmetric under the transformations $\la\mapsto\alpha^\ell\la$ for $\ell\in\{0,1,2\}$.
Using these substitutions in the global relation on $[0,1]$~\eqref{eqn:GR}, we could obtain three equations relating the remaining four unknown spectral boundary values.
If there were a third boundary condition in problem~\eqref{eqn:INVPwithBC}, then we might attempt to solve the resulting linear system for the remaining spectral boundary values, but there is no such third boundary condition, so this approach must be modified for the INVP.
The nonlocal condition~\eqref{eqn:INVPwithBC.NC} must play a role in the construction of the D to N map; if it did not, then problem~\eqref{eqn:INVPwithBC} could be solved without the nonlocal condition, which is false because problem~\eqref{eqn:INVPwithBC.PDE}--\eqref{eqn:INVPwithBC.BC1} is an IBVP known to be underspecified hence illposed~\cite{Pel2005a}.
Guided by these cogitations, we adapt the nonlocal condition and global relation so that they feature some common terms before employing the aforementioned $\la\mapsto\alpha^\ell\la$ symmetries.

We apply the same time transform from equation~\eqref{eqn:defn.f} to~\eqref{eqn:INVPwithBC.NC}, yielding
\begin{equation}
    \int_0^1 K(y) f_0(\la;y,t') \D y = \int_0^{t'} \re^{-\ri\la^3s} h_2(s) \D s = : \tilde{h}_2(\la;t').
\end{equation}
Henceforth, for efficiency of presentation, we suppress the $t'$ dependence of $f_j$ and $\tilde{h}_j$.
Instead of using the global relation on $[0,1]$, we use the global relation on $[y,1]$~\eqref{eqn:GR} at time $t'$, multiply each term by $\re^{\ri\la y}K(y)$, and integrate over $y\in[0,1]$, obtaining
\begin{equation} \label{eqn:nonlocalGR}
    \re^{-\ri\la} \widehat{K}(-\la) f_2(\la;1) - \ri\la \int_0^1 K(y) f_1(\la;y) \D y - \int_0^1 K(y) f_2(\la;y) \D y
    = N_1(\la) - \re^{-\ri\la^3t'} \nu_1(\la),
\end{equation}
which we refer to as the \emph{nonlocal global relation} and in which
\begin{align*}
    N_1(\la)   &= \la^2 \re^{-\ri\la} \widehat{K}(-\la) \tilde{h}_0(\la) - \ri\la \re^{-\ri\la} \widehat{K}(-\la) \tilde{h}_1(\la) - \la^2 \tilde{h}_2(\la)
    + \int_0^1 K(y) \re^{\ri\la y} \widehat{U}(\la;y,1) \D y, \\
    \nu_1(\la) &= \int_0^1 K(y) \re^{\ri\la y} \hat{u}(\la;t';y,1) \D y
\end{align*}
also have their dependence on $t'$ suppressed.
Note that the terms in $N_1$ are all expressed explicitly using the data of the problem, while $\nu_1$ involves $\hat{u}$, which is not a datum of the problem.
We beg the reader to tolerate the slight notational inconvenience of carrying around these two terms instead of combining them because of the benefit in emphasizing the separation of data and nondata.
Using the maps $\la\mapsto\alpha^j\la$ for $j\in\{0,1,2\}$, we obtain the linear system
\begin{equation*}
    \begin{pmatrix}
        1 & 1 & \re^{-\ri\la}\widehat{K}(-\la) \\
        \alpha & 1 & \re^{-\ri\alpha\la}\widehat{K}(-\alpha\la) \\
        \alpha^2 & 1 & \re^{-\ri\alpha^2\la}\widehat{K}(-\alpha^2\la)
    \end{pmatrix}
    \begin{pmatrix}
        -\ri\la\int_0^1 K(y) f_1(\la;y) \D y \\
        -\int_0^1 K(y) f_2(\la;y) \D y \\
        f_2(\la;1)
    \end{pmatrix}
    =
    \begin{pmatrix} N_1(\la) \\ N_1(\alpha\la) \\ N_1(\alpha^2\la) \end{pmatrix}
    - \re^{-\ri\la^3t'} \begin{pmatrix} \nu_1(\la) \\ \nu_1(\alpha\la) \\ \nu_1(\alpha^2\la) \end{pmatrix}.
\end{equation*}

We solve this system to obtain an expression for $f_2(\la;1)$.
We could also determine expressions for the other two entries in the vector of unknowns, but that is unnecessary because they do not appear in the Ehrenpreis form~\eqref{eqn:EhrenpreisForm}.
Via Cramer's rule, we find
\[
    f_2(\la;1) = \frac{1}{\Delta(\la)}\sum_{j=0}^2 \alpha^j N_1(\alpha^j\la) - \frac{\re^{-\ri\la^3t'}}{\Delta(\la)}\sum_{j=0}^2 \alpha^j \nu_1(\alpha^j\la),
\]
in which the determinant of the system is, up to multiplication by $\ri\sqrt{3}$,
\[
    \Delta(\la) = \sum_{j=0}^2 \alpha^j \re^{-\ri\alpha^j\la} \widehat{K}(-\alpha^j\la).
\]
We now have expressions for three of the six spectral boundary values, albeit with one depending on $\nu_1$.
The linear combination of spectral boundary values that appears in the integral along $\partial D_R^-$ of the Ehrenpreis form~\eqref{eqn:EhrenpreisForm} is
\begin{multline} \label{eqn:RightSpectralBoundaryValues}
    f_2(\la;1) + \ri\la f_1(\la;1) - \la^2 f_0(\la;1)
    = \frac{1}{\Delta(\la)}\sum_{j=0}^2 \alpha^j N_1(\alpha^j\la) + \ri\la \tilde{h}_1(\la;t) - \la^2 \tilde{h}_0(\la;t) \\
    - \frac{\re^{-\ri\la^3t'}}{\Delta(\la)}\sum_{j=0}^2 \alpha^j \nu_1(\alpha^j\la).
\end{multline}

The global relation on $[0,1]$~\eqref{eqn:GR} is
\begin{equation} \label{eqn:LeftSpectralBoundaryValues}
    f_2(\la;0) + \ri\la f_1(\la;0) - \la^2 f_0(\la;0)
    =
    N_0(\la)
    - \re^{-\ri\la^3t'} \nu_0(\la),
\end{equation}
where
\begin{align*}
    N_0(\la) &= \re^{-\ri\la} \left[ \frac1{\Delta(\la)} \sum_{j=0}^2 \alpha^j N_1(\alpha^j\la) + \ri\la \tilde{h}_1(\la) - \la^2\tilde{h}_0(\la) \right] - \widehat{U}(\la), \\
    \nu_0(\la) &= \frac{\re^{-\ri\la}}{\Delta(\la)} \sum_{j=0}^2 \alpha^j \nu_1(\alpha^j\la) - \hat{u}(\la;t').
\end{align*}
As above, $N_0$ contains the data of the problem and $\nu_0$ contains nondata, and both have their dependence on $t'$ suppressed.
Equation~\eqref{eqn:LeftSpectralBoundaryValues} provides precisely the linear combination of spectral boundary values that appears in the integral along $\partial D_R^+$ of the Ehrenpreis form.

\subsection{Solution of the INVP}

By substituting formulae~\eqref{eqn:RightSpectralBoundaryValues} and~\eqref{eqn:LeftSpectralBoundaryValues} into the Ehrenpreis form~\eqref{eqn:EhrenpreisForm}, one obtains an expression for the solution $u(x,t)$,
but it depends on both the data $N_0,N_1$ and the nondata $\nu_0,\nu_1$.
We aim to show that the terms involving nondata contribute nothing to the solution.
The main tools are the following lemmata.

\begin{lem} \label{lem:FullSoln.OneNonlocal.ZerosLocus}
    Suppose that $K$ has bounded total variation and $K$ is continuous and nonzero at $0$.
    There exists a finite $R>0$ such that there are no zeros of $\Delta$ in $\clos(D_R^+ \cup D_R^-)$.
\end{lem}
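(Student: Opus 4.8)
The plan is to locate the zeros of $\Delta$ by analysing the asymptotic behaviour of each of its three summands as $\la\to\infty$ in the closed sectors that comprise $\clos(D^+\cup D^-)$, and to show that, outside a sufficiently large disc, one summand strictly dominates the other two, so no cancellation — hence no zero — can occur. Write $\Delta(\la)=\sum_{j=0}^2 \alpha^j \re^{-\ri\alpha^j\la}\widehat K(-\alpha^j\la)$. The first observation is that $K$ being of bounded total variation gives, via integration by parts in $\widehat K(\mu)=\int_0^1 \re^{-\ri\mu y}K(y)\D y$, the bound $\widehat K(\mu)=\mu^{-1}\bigl(\ri K(0) - \ri\re^{-\ri\mu}K(1)\bigr) + o(\mu^{-1})$ as $\mu\to\infty$ in $\clos(\CC^-)$ (where $\re^{-\ri\mu}$ is bounded), while $\widehat K(\mu)=O(1/\mu)$ more crudely is all we need for $\mu$ in $\clos(\CC^+)$ after extracting the leading exponential. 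More precisely, $\mu\widehat K(-\mu)=\ri K(0)-\ri\re^{\ri\mu}K(1)+o(1)$, and the continuity and nonvanishing of $K$ at $0$ make the $\ri K(0)$ term a genuine nonzero constant.

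The second and central step is a case analysis over the six open sectors $\arg\la\in\bigl(\tfrac{k\pi}{3},\tfrac{(k+1)\pi}{3}\bigr)$, together with the six rays, keeping track of which of $D^+$, $D^-$, $E^+$, $E^-$ each belongs to. For a fixed sector, the three points $\la,\alpha\la,\alpha^2\la$ are distributed one in each of three consecutive sectors, so exactly one of $\re^{-\ri\alpha^j\la}$, $\re^{\ri\alpha^j\la}$ (the two exponentials appearing in the asymptotics of the $j$th term) is exponentially large, one is $O(1)$, and the behaviour is governed by $\Re(\pm\ri\alpha^j\la)$. I would verify that in each of the sectors making up $D^+\cup D^-$ there is a unique index $j=j_0$ for which the corresponding term of $\Delta$ contains a factor growing like $\re^{c|\la|}$ with $c>0$ strictly larger than the growth rate of every other term (the competing terms being either bounded or decaying, after the $1/\mu$ factor is accounted for). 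Multiplying $\Delta(\la)$ by $\la\,\re^{-(\text{dominant exponent})}$ and letting $|\la|\to\infty$ along any ray in the sector, the dominant term tends to $\pm\ri\alpha^{j_0}K(1)$ or $\pm\ri\alpha^{j_0}K(0)$ (a nonzero constant by hypothesis) while all other terms tend to $0$; hence $\Delta(\la)\ne0$ for $|\la|$ large. A compactness argument on the unit circle intersected with $\clos(D^+\cup D^-)$ then upgrades the pointwise ray-by-ray statement to a uniform one, yielding a single $R$ that works across all sectors and rays.

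The main obstacle I anticipate is bookkeeping rather than conceptual: one must be careful that on the boundary rays $\re^{\ri j\pi/3}[0,\infty)$ — which lie in $\clos(D^\pm)$ but are where $\Re(-\ri\la^3)=0$ — the dominant-term argument still produces a nonzero limit, and that the ``subdominant'' terms there are genuinely $o(1)$ and not merely $O(1)$; this is exactly where bounded variation (rather than mere continuity) of $K$ is used, since it forces the error term in the integration-by-parts expansion of $\widehat K$ to be $o(1/\mu)$ uniformly in $\arg\mu$ over the relevant half-plane. A secondary subtlety is that the leading constant attached to the dominant term could in principle be $K(1)$ (from the $\re^{-\ri\mu}K(1)$ piece) rather than $K(0)$ on some sectors; if the hypotheses only guarantee $K$ nonzero at $0$, I would need to check that whenever the dominant contribution would otherwise be the $K(1)$ piece, that piece is itself subdominant to a $K(0)$ piece from another summand, so that the nonvanishing of $K$ at $0$ alone suffices — alternatively, one reorganizes the three terms of $\Delta$ so that in every sector of $D^+\cup D^-$ the surviving constant is a unimodular multiple of $K(0)$. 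Once the dominance structure is pinned down, the conclusion is immediate.
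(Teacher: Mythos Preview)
Your approach is essentially the same as the paper's: identify the dominant summand of $\Delta$ in each sector of $\clos(D^+\cup D^-)$ via integration by parts in $\widehat K$, and show it carries a nonzero multiple of $K(0)$. Two small points of comparison are worth noting. First, the paper exploits the rotational symmetry $\Delta(\alpha^k\la)=\alpha^{-k}\Delta(\la)$ to reduce your six-sector case analysis to a single wedge; this saves bookkeeping but changes nothing conceptually. Second, your attribution of the crucial error bound is slightly off: bounded variation alone gives only $\int_0^1 \re^{-\ri\mu y}\,\mathrm dK(y)=O(1)$, not $o(1)$, when $\re^{-\ri\mu}$ is growing, so the remainder after integration by parts is the \emph{same} order as the putative leading term. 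The paper resolves this with a $\delta$-split of the integral near the relevant endpoint: on $[1-\delta,1]$ (in the $\kappa$ variable) continuity of $K$ at $0$ makes the local variation small relative to $\kappa(1)=K(0)$, while on $[0,1-\delta]$ the exponential is smaller by a factor $\re^{-c\delta|\la|}$. This is precisely the mechanism you gesture at when you say ``this is exactly where bounded variation is used'', but it is really the \emph{combination} of bounded variation (to integrate by parts) and continuity at $0$ (to make the local Stieltjes term small) that does the work. Your worry about $K(1)$ is unfounded: in every sector of $D^\pm$ the exponentially growing factor is $\re^{-\ri\alpha^{j_0}\la}$ for some $j_0$, and it always multiplies $\kappa(1)=K(0)$; the $K(1)$ pieces sum to an $O(|\la|^{-1})$ contribution that is swamped.
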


\begin{lem} \label{lem:FullSoln.OneNonlocal.NondataDecays}
    Suppose that $K$ has bounded total variation and $K$ is continuous and nonzero at $0$.
    For all $t' \geq t>0$,
    \[
        \re^{-\ri\la^3(t'-t)} \nu_0(\la;t') = \bigoh(1),
    \]
    uniformly in $\arg(\la)$, as $\la\to\infty$ within $\clos(D^+_R)$.
    Similarly, for all $t' \geq t>0$,
    \[
        \re^{-\ri\la^3(t'-t)}\frac{1}{\Delta(\la)}\sum_{j=0}^2 \alpha^j \nu_1(\alpha^j\la)
        = \lindecayla,
    \]
    uniformly in $\arg(\la)$, as $\la\to\infty$ within $\clos(D^-_R)$.
\end{lem}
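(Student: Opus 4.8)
The plan is to estimate each piece of $\nu_0$ and of the combination $\sum_j\alpha^j\nu_1(\alpha^j\la)/\Delta(\la)$ directly from the definitions, using the global relation on subintervals to rewrite the troublesome $\hat u(\la;t';y,1)$ terms in a form where the exponential kernel $\re^{-\ri\la^3(t'-t)}$ can be controlled. Recall $\nu_0(\la)=\re^{-\ri\la}\Delta(\la)^{-1}\sum_j\alpha^j\nu_1(\alpha^j\la)-\hat u(\la;t')$ and $\nu_1(\la)=\int_0^1 K(y)\re^{\ri\la y}\hat u(\la;t';y,1)\,\D y$, where $\hat u(\la;t';y,1)=\int_y^1\re^{-\ri\la x}u(x,t')\,\D x$. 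First I would observe that, for $\la\in\clos(D^+)$, the factor $\re^{-\ri\la^3(t'-t)}$ is bounded (indeed decaying) since $\Re(-\ri\la^3)\le 0$ there, and similarly $\re^{-\ri\la^3(t'-t)}$ is bounded on $\clos(D^-)$; so the content of the lemma is really that the spatial-transform factors multiplying it do not grow. For $\hat u(\la;t')=\int_0^1\re^{-\ri\la x}u(x,t')\,\D x$ and $\la\in\clos(D^+)$, the kernel $\re^{-\ri\la x}$ satisfies $\abs{\re^{-\ri\la x}}=\re^{x\Im\la}\le 1$ when $\Im\la\le 0$, but $D^+$ lies in the upper half plane; so naively this term grows like $\re^{\Im\la}$. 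The resolution — and this is the key step — is that $\re^{-\ri\la}\hat u(\la;t')$, not $\hat u(\la;t')$ alone, is what must be estimated once one also uses the global relation, and the combination $\re^{-\ri\la}\Delta(\la)^{-1}\sum\alpha^j\nu_1(\alpha^j\la)-\hat u(\la;t')$ is engineered so that these exponentially large contributions cancel, leaving only boundary terms of order $\bigoh(1)$ after integration by parts.

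Concretely, I would proceed as follows. By Lemma~\ref{lem:FullSoln.OneNonlocal.ZerosLocus}, $\Delta$ is zero-free and bounded away from $0$ on $\clos(D_R^\pm)$ for $R$ large; moreover, since $K$ has bounded variation, $\widehat K(-\alpha^j\la)=\bigoh(1)$ along the rays where $\re^{-\ri\alpha^j\la}$ is bounded, and the dominant balance in $\Delta(\la)=\sum_j\alpha^j\re^{-\ri\alpha^j\la}\widehat K(-\alpha^j\la)$ on each subsector of $D_R^\pm$ comes from the single index $j$ for which $\Re(-\ri\alpha^j\la)$ is largest, so $\abs{\Delta(\la)}$ is comparable to $\max_j\abs{\re^{-\ri\alpha^j\la}}$ up to the constant $\abs{K(0)}$ (this is essentially the estimate already extracted in the proof of Lemma~\ref{lem:FullSoln.OneNonlocal.ZerosLocus}). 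Next, for the $\nu_1$ piece: in $\nu_1(\alpha^j\la)=\int_0^1 K(y)\re^{\ri\alpha^j\la y}\hat u(\alpha^j\la;t';y,1)\,\D y$, I substitute $\hat u(\alpha^j\la;t';y,1)=\int_y^1\re^{-\ri\alpha^j\la x}u(x,t')\,\D x$, interchange the order of integration, and integrate by parts once in $x$ (using smoothness of $u$ and boundedness of variation of $K$) to produce a factor $1/(\ri\alpha^j\la)$ together with boundary terms at $x=0,1$ and at $x=y$; the surviving exponential kernels are $\re^{\ri\alpha^j\la(y-x)}$ with $y\le x$, hence $\abs{\re^{\ri\alpha^j\la(y-x)}}\le 1$ precisely when $\Re(\ri\alpha^j\la)\le 0$, i.e. $\Re(-\ri\alpha^j\la)\ge 0$ — which for $\la\in D_R^-$ holds for the index $j$ contributing the largest term. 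Dividing by $\Delta(\la)$, whose modulus is comparable to that largest exponential, cancels the growth, and the extra $1/\la$ from the integration by parts delivers the claimed $\lindecayla$ decay. For $\nu_0$, the same computation shows $\re^{-\ri\la}\Delta(\la)^{-1}\sum_j\alpha^j\nu_1(\alpha^j\la)$ reproduces, up to $\bigoh(\la^{-1})$ corrections, exactly the exponentially growing part of $\hat u(\la;t')$ coming from the endpoint $x=1$ (this is the algebraic identity forced by the structure of the $\alpha$-system and the $\re^{-\ri\la}$ prefactor), so the difference defining $\nu_0$ is $\bigoh(1)$ on $\clos(D_R^+)$.

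The main obstacle I anticipate is bookkeeping the endpoint contributions carefully enough to see the cancellation: the integration by parts in the double integral for $\nu_1$ produces boundary terms at $x=y$ that, after the $y$-integration against $K(y)\re^{\ri\alpha^j\la y}$, recombine into exactly the pieces of $\re^{-\ri\la}\hat u(\la;t')$ that would otherwise be exponentially large, and one must track the precise coefficients (including the $\alpha^j$ weights and the cube-root symmetry $\sum_j\alpha^j=0$, $\sum_j\alpha^{2j}=0$) to confirm they match. A secondary technical point is handling the non-smoothness of $K$: since $K$ is only of bounded variation, the integration by parts in $y$ must be done in the Riemann–Stieltjes sense (or $K$ must be regularized), and one checks that the total-variation bound on $K$ keeps all the resulting terms uniformly bounded in $\arg\la$. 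Once these are in hand, combining with the trivial bound $\abs{\re^{-\ri\la^3(t'-t)}}\le 1$ on the relevant closed regions (valid since $t'-t\ge 0$ and $\Re(-\ri\la^3)\le 0$ on $\clos D^+$, with the analogous inequality on $\clos D^-$) finishes both assertions.
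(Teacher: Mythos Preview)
Your overall strategy is reasonable, but there is a quantitative slip that breaks the second claim, and the cancellation mechanism you describe for the first claim is not the one that actually occurs.

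For the second claim, you assert that $\abs{\Delta(\la)}$ is comparable to $\max_j\abs{\re^{-\ri\alpha^j\la}}$ up to the constant $\abs{K(0)}$. This is off by a factor of $\la$: since $K$ has bounded variation, a single Riemann--Stieltjes integration by parts in $\hat\kappa(\la)=\int_0^1\re^{-\ri\la y}\kappa(y)\D y$ shows that the dominant contribution on $\clos(D_R^+)$ is $\frac{\ri}{\la}\kappa(1)\re^{-\ri\la}=\frac{\ri K(0)}{\la}\re^{-\ri\la}$, so in fact $\Delta(\la)=\Theta(\re^{-\ri\la}/\la)$ there (and analogously on the two components of $D_R^-$ after rotation). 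Your single integration by parts in $x$ gives $\hat u(\la;t';y,1)=\bigoh(\re^{-\ri\la}/\la)$ uniformly in $y$, hence $\nu_1(\la)=\bigoh(\re^{-\ri\la}/\la)$ on $\clos(D_R^+)$; dividing by $\Delta$ then yields only $\bigoh(1)$, not $\lindecayla$. The paper closes this gap by also integrating by parts in the \emph{outer} $y$ integral against the kernel $\re^{\ri\la y}$ (Stieltjes, on a short interval $[0,\delta]$ where $K$ is continuous), extracting a second factor of $\la^{-1}$. The leading pieces of $\nu_1$ are then $\bigoh(\re^{-\ri\la}/\la^2)$, and the remaining pieces are either $\bigoh(\la^{-1})$ or $\bigoh(\re^{-\ri\la(1-\delta)})$, both exponentially small relative to $\Delta$ on $\clos(D_R^+)$.

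For the first claim, the boundary term at $x=y$ from your integration by parts in $x$ produces, after multiplying by $K(y)\re^{\ri\alpha^j\la y}$ and integrating in $y$, the $\la$-independent quantity $\frac{1}{\ri\alpha^j\la}\int_0^1K(y)u(y,t')\D y$; this does not reproduce any piece of $\hat u(\la;t')$, so the cancellation you anticipate does not arise this way. What the paper uses instead is the trivial splitting $\hat u(\la;t';y,1)=\hat u(\la;t')-\hat u(\la;t';0,y)$ inside the $j=0$ term of $\nu_1$. The first part is independent of $y$, so the $y$ integral yields $\widehat K(-\la)=\re^{\ri\la}\hat\kappa(\la)$, and then the identity $\re^{-\ri\la}\widehat K(-\la)/\Delta(\la)=\hat\kappa(\la)/\Delta(\la)=1+\lindecayla$ on $\clos(D_R^+)$ (because $\hat\kappa(\la)$ is the dominant summand of $\Delta$ there) is exactly what cancels the $-\hat u(\la;t')$ in $\nu_0$. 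The surviving terms involve $\hat u(\alpha^j\la;t';y,1)$ for $j=1,2$ and $\hat u(\la;t';0,y)$, each of which is $\lindecayla$ uniformly in $y$ after one integration by parts, and the prefactor $\re^{-\ri\la}/\Delta(\la)=\bigoh(\la)$ then gives $\nu_0=\bigoh(1)$.
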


In the Ehrenpreis form~\eqref{eqn:EhrenpreisForm}, we make substitutions for the spectral boundary values using formulae~\eqref{eqn:RightSpectralBoundaryValues} and~\eqref{eqn:LeftSpectralBoundaryValues}, to obtain
\begin{multline} \label{eqn:FullSoln.OneNonlocal.SolutionWithNondata}
    2\pi u(x,t)
    = [\mbox{data}]
    - \int_{\partial D_R^+} \re^{\ri\la \frac x2 - \ri\la^3 (t'-t)} \nu_0(\la;t')\re^{\ri\la \frac x2} \D\la \\
    - \int_{\partial D_R^-} \re^{\ri\la(x-1) - \ri\la^3 (t'-t)} \frac{1}{\Delta(\la)}\sum_{j=0}^2 \alpha^j \nu_1(\alpha^j\la;t') \D\la.
\end{multline}
By lemmata~\ref{lem:FullSoln.OneNonlocal.ZerosLocus}~and~\ref{lem:FullSoln.OneNonlocal.NondataDecays}, Jordan's lemma, and Cauchy's theorem, the two displayed integrals on the right of equation~\eqref{eqn:FullSoln.OneNonlocal.SolutionWithNondata} both evaluate to zero.
Note that, to justify the application of Jordan's lemma in the first integral, we used
\[
    \re^{\ri\la \frac x2} = \bigoh\left( \exp\left[-x\abs\la\frac{\sqrt3}2\right] \right),
\]
uniformly in $\arg(\la)$, as $\la\to\infty$ within $\clos(D^+_R)$,
and the other $\re^{\ri\la \frac x2}$ factor is used as the kernel for Jordan's lemma.
This justifies the following theorem.

\begin{thm} \label{thm:FullSoln.OneNonlocal.Solution.D}
    Suppose that problem~\eqref{eqn:INVPwithBC} has solution $u(x,t)$, that $U$ and $h_k$ are piecewise continuous, that $K$ has bounded total variation, and that $K$ is continuous and nonzero at $0$.
    Then, for all $t'\geq t$,
    \begin{multline} \label{eqn:FullSoln.DtoN.OneNonlocal.Solution.D}
        2\pi u(x,t)
        = \int_{-\infty}^\infty \re^{\ri\la x + \ri\la^3 t} \widehat{U}(\la) \D\la
        + \int_{\partial D_R^+} \re^{\ri\la x + \ri\la^3 t} N_0(\la;t') \D\la \\
        + \int_{\partial D_R^-} \re^{\ri\la(x-1) + \ri\la^3 t} \left[ \frac{1}{\Delta(\la)}\sum_{j=0}^2 \alpha^j N_1(\alpha^j\la;t') + \ri\la \tilde{h}_1(\la;t') - \la^2 \tilde{h}_0(\la;t') \right] \D\la.
    \end{multline}
\end{thm}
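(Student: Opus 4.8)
The plan is to assemble the statement directly from the ingredients that have already been laid out in the preceding subsections, so the proof is essentially a matter of bookkeeping rather than new analysis. The starting point is the Ehrenpreis form~\eqref{eqn:EhrenpreisForm}, valid for any $R>0$ and any $t'\geq t$, which expresses $2\pi u(x,t)$ as the sum of an initial-datum integral over $\RR$ plus one integral over $\partial D_R^+$ and one over $\partial D_R^-$, each involving a linear combination of the six spectral boundary values. Into this I substitute the D to N expressions: formula~\eqref{eqn:LeftSpectralBoundaryValues} for the combination $f_2(\la;0)+\ri\la f_1(\la;0)-\la^2 f_0(\la;0)$ appearing on $\partial D_R^+$, and formula~\eqref{eqn:RightSpectralBoundaryValues} for the combination $f_2(\la;1)+\ri\la f_1(\la;1)-\la^2 f_0(\la;1)$ appearing on $\partial D_R^-$. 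This produces equation~\eqref{eqn:FullSoln.OneNonlocal.SolutionWithNondata}, in which the desired right-hand side of~\eqref{eqn:FullSoln.DtoN.OneNonlocal.Solution.D} is exactly the bracket labelled $[\text{data}]$, and the remaining task is to show that the two nondata integrals vanish.

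For this I fix $R$ large enough that lemma~\ref{lem:FullSoln.OneNonlocal.ZerosLocus} applies, so that $1/\Delta(\la)$ is holomorphic and bounded on $\clos(D_R^+\cup D_R^-)$; here the hypothesis that $K$ has bounded total variation and is continuous and nonzero at $0$ is used. Consider first the $\partial D_R^+$ integral. I write the integrand as $\re^{\ri\la x/2}\cdot\bigl(\re^{\ri\la x/2-\ri\la^3(t'-t)}\nu_0(\la;t')\bigr)$. By lemma~\ref{lem:FullSoln.OneNonlocal.NondataDecays}, the second factor is $\bigoh(1)$ uniformly in $\arg\la$ as $\la\to\infty$ in $\clos(D_R^+)$, while the first factor decays like $\exp(-x|\la|\sqrt3/2)$ there (since for $\la\in D^+$ one has $\Re(\ri\la)\leq -|\la|\sqrt3/2$ on the relevant sector boundaries, and this is the content of the displayed estimate just before the theorem). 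Combining these, the whole integrand is $\bigoh\bigl(\exp(-x|\la|\sqrt3/2)\bigr)$, so an application of Jordan's lemma on the arcs at infinity together with Cauchy's theorem — the integrand being holomorphic throughout $D_R^+$, again by lemma~\ref{lem:FullSoln.OneNonlocal.ZerosLocus} — shows the integral over $\partial D_R^+$ equals zero. For the $\partial D_R^-$ integral I argue identically, this time invoking the second assertion of lemma~\ref{lem:FullSoln.OneNonlocal.NondataDecays}, which gives $\re^{-\ri\la^3(t'-t)}\Delta(\la)^{-1}\sum_j\alpha^j\nu_1(\alpha^j\la;t')=\lindecayla$ uniformly in $\clos(D_R^-)$; the extra factor $\re^{\ri\la(x-1)}$ is bounded on $\clos(D^-)$ for $x\in[0,1]$ and in fact decays, and $\bigoh(\la^{-1})$ decay already suffices for Jordan's lemma, so this integral also vanishes.

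Subtracting the two vanishing integrals from~\eqref{eqn:FullSoln.OneNonlocal.SolutionWithNondata} leaves precisely the claimed formula~\eqref{eqn:FullSoln.DtoN.OneNonlocal.Solution.D}, and since $R$ was any sufficiently large radius and $t'\geq t$ was arbitrary, the representation holds as stated. I would also remark that the hypotheses on $U$ and $h_k$ (piecewise continuity) are what make the time transforms $\tilde h_k$ and the spatial transforms $\widehat U$ well defined and the various integration-by-parts manipulations leading to the Ehrenpreis form legitimate; these are assumptions in force from the statement of the original problem. The only genuinely substantive points are the two lemmata, which are quoted here; within this proof the sole nontrivial step is combining the exponential decay of the geometric kernel factor on $\clos(D_R^+)$ with the boundedness of the nondata factor to license Jordan's lemma, and I expect the routine care needed there — keeping track of which exponential is the Jordan kernel and which merely supplies decay — to be the main thing to get right.
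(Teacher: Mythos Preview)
Your proof is correct and follows essentially the same approach as the paper: substitute the D to N formulae~\eqref{eqn:RightSpectralBoundaryValues} and~\eqref{eqn:LeftSpectralBoundaryValues} into the Ehrenpreis form~\eqref{eqn:EhrenpreisForm} to obtain~\eqref{eqn:FullSoln.OneNonlocal.SolutionWithNondata}, then invoke lemmata~\ref{lem:FullSoln.OneNonlocal.ZerosLocus} and~\ref{lem:FullSoln.OneNonlocal.NondataDecays} together with Jordan's lemma and Cauchy's theorem to kill the two nondata integrals. In particular, you correctly identify the key device for the $\partial D_R^+$ integral, namely splitting $\re^{\ri\la x}=\re^{\ri\la x/2}\cdot\re^{\ri\la x/2}$ so that one factor supplies genuine exponential decay (compensating for the merely $\bigoh(1)$ bound on $\nu_0$) while the other serves as the Jordan kernel --- this is exactly the paper's argument.
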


By their definition~\eqref{eqn:defnDpmEpm}, $E_R^+\cup E_R^-$ and $D_R^+\cup D_R^-$ have the same boundaries, except on the circle $C(0,R)$, but the boundaries are oppositely oriented.
Note that two of the four semiinfinite components of $\partial D_R^-$ are oppositely oriented but coincident with two of the four semiinfinite components of $\partial E_R^+$, and these lie along the real line.
It follows immediately from the definition of $N_0$ that, for all $\la\in\RR$,
\[
    \widehat{U}(\la)
    = - N_0(\la;t') + \re^{-\ri\la} \left[ \frac{1}{\Delta(\la)}\sum_{j=0}^2 \alpha^j N_1(\alpha^j\la;t') + \ri\la \tilde{h}_1(\la;t') - \la^2 \tilde{h}_0(\la;t') \right].
\]
making this substitution in the first integral of equation~\eqref{eqn:FullSoln.DtoN.OneNonlocal.Solution.D} and perturbing the contour away from the real line around $C(0,R)$, we arrive at the following corollary.

\begin{cor} \label{cor:FullSoln.OneNonlocal.Solution.E}
    Under the criteria of theorem~\ref{thm:FullSoln.OneNonlocal.Solution.D}, for all $t'\geq t$,
    \begin{multline} \label{eqn:FullSoln.DtoN.OneNonlocal.Solution.E}
        2\pi u(x,t)
        = - \int_{\partial E_R^+} \re^{\ri\la x + \ri\la^3 t} N_0(\la;t') \D\la \\
        - \int_{\partial E_R^-} \re^{\ri\la(x-1) + \ri\la^3 t} \left[ \frac{1}{\Delta(\la)}\sum_{j=0}^2 \alpha^j N_1(\alpha^j\la;t') + \ri\la \tilde{h}_1(\la;t') - \la^2 \tilde{h}_0(\la;t') \right] \D\la.
    \end{multline}
\end{cor}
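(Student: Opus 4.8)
The plan is to start from the solution representation~\eqref{eqn:FullSoln.DtoN.OneNonlocal.Solution.D} of Theorem~\ref{thm:FullSoln.OneNonlocal.Solution.D} and eliminate the first integral, the one along the real line carrying $\widehat U$, by substituting the identity for $\widehat U(\la)$ that holds for all real $\la$. That identity itself follows directly from the definition of $N_0$ given just before the statement of Theorem~\ref{thm:FullSoln.OneNonlocal.Solution.D}: solving $N_0(\la;t') = \re^{-\ri\la}\bigl[\tfrac1{\Delta(\la)}\sum_{j=0}^2\alpha^j N_1(\alpha^j\la;t') + \ri\la\tilde h_1(\la;t') - \la^2\tilde h_0(\la;t')\bigr] - \widehat U(\la)$ for $\widehat U(\la)$. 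I would note that this rearrangement is legitimate on the whole real axis except possibly at the finitely many points (if any) where $\Delta$ vanishes on $\RR$; since the real-line integral is interpreted as a Cauchy principal value and the contour will in any case be perturbed off $\RR$ near $C(0,R)$, these exceptional points cause no difficulty.

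First I would make the substitution for $\widehat U(\la)$ inside $\int_{-\infty}^\infty \re^{\ri\la x+\ri\la^3 t}\widehat U(\la)\D\la$, producing two real-line integrals: one with integrand $-\re^{\ri\la x+\ri\la^3 t}N_0(\la;t')$, and one with integrand $\re^{\ri\la(x-1)+\ri\la^3 t}\bigl[\tfrac1{\Delta(\la)}\sum_{j=0}^2\alpha^j N_1(\alpha^j\la;t') + \ri\la\tilde h_1(\la;t') - \la^2\tilde h_0(\la;t')\bigr]$, the latter obtained by absorbing the $\re^{-\ri\la}$ factor into the exponential. Next I would combine each of these with the corresponding $\partial D_R^\pm$ integral already present in~\eqref{eqn:FullSoln.DtoN.OneNonlocal.Solution.D}. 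For the $N_0$ terms: the real-line piece (oriented left to right) together with $\int_{\partial D_R^+}$ can be reassembled, using the orientation conventions for $\partial D_R^+$, $\partial E_R^+$ and the observation recorded in the paragraph before the corollary that $\partial E_R^+ \cup \partial D_R^+$ differ from the real line only on $C(0,R)$ with compatible orientations, into $-\int_{\partial E_R^+}\re^{\ri\la x+\ri\la^3 t}N_0(\la;t')\D\la$, after the small-arc perturbation around $C(0,R)$. For the $N_1,\tilde h_0,\tilde h_1$ terms: the new real-line piece together with the existing $\int_{\partial D_R^-}$ integral reassembles into $-\int_{\partial E_R^-}$ of the same integrand, again using that two of the semiinfinite components of $\partial D_R^-$ lie along the real line oppositely oriented to two components of $\partial E_R^+$, and that $E_R^\pm$ and $D_R^\pm$ share boundaries away from $C(0,R)$ with opposite orientation.

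The main obstacle is the bookkeeping on $C(0,R)$: when the real-line contour is spliced with $\partial D_R^\pm$, the combined contour is not literally $-\partial E_R^\pm$ but differs from it along arcs of the circle of radius $R$, so one must check that the perturbation of the contour "away from the real line around $C(0,R)$" introduces no net contribution. This is handled by observing that the full integrand appearing on $C(0,R)$ after substitution is analytic in a neighbourhood of that circle — the only possible singularities of $1/\Delta$ are its zeros, and Lemma~\ref{lem:FullSoln.OneNonlocal.ZerosLocus} guarantees, upon taking $R$ large enough, that there are no zeros of $\Delta$ in $\clos(D_R^+\cup D_R^-)$, hence none on the relevant arcs — so by Cauchy's theorem the closed arc contributions cancel and the perturbed contour may be taken to be exactly $-\partial E_R^\pm$. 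A secondary point to verify is convergence: each of the integrands in~\eqref{eqn:FullSoln.DtoN.OneNonlocal.Solution.E} decays along $\partial E_R^\pm$ fast enough for the (principal value) integrals to converge, which follows from the same integration-by-parts estimates on the $\tilde h_k$ and $\widehat U$ used earlier in the section together with the decay of $\re^{\ri\la x+\ri\la^3 t}$ on $\clos(E^+)$ and of $\re^{\ri\la(x-1)+\ri\la^3 t}$ on $\clos(E^-)$. Assembling these pieces yields~\eqref{eqn:FullSoln.DtoN.OneNonlocal.Solution.E}, completing the proof.
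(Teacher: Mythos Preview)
Your proposal is correct and follows essentially the same approach as the paper: substitute the identity for $\widehat{U}(\la)$ obtained from the definition of $N_0$ into the real-line integral of~\eqref{eqn:FullSoln.DtoN.OneNonlocal.Solution.D}, then recombine with the $\partial D_R^\pm$ integrals using the geometric relationship between $\partial D_R^\pm$ and $\partial E_R^\pm$ and perturb around $C(0,R)$. You are in fact more explicit than the paper about the $C(0,R)$ bookkeeping and the convergence verification, both of which the paper leaves tacit.
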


\begin{proof}[Proof of lemma~\ref{lem:FullSoln.OneNonlocal.ZerosLocus}]
    This proof follows the arguments of~\cite{Lan1931a}.
    For notational convenience, we define $\kappa(y)=K(1-y)$ so that $\hat\kappa(\la)=\re^{-\ri\la}\widehat{K}(-\la)$.
    
    It is immediate from the definition that $\Delta(\alpha^k\la)=\alpha^{-k}\Delta(\la)$, so the zeros of $\Delta$ are arranged symmetrically according to rotation by $2\pi/3$.
    Consider $\la$ such that $\ri\la=a+b\ri$ with $a,b\gg0$.
    In this region, we will show that the term
    \(
        \alpha \hat\kappa(\alpha\la)
    \)
    is nonzero and dominates the other two terms of $\Delta$, from which it follows that $\Delta$ has no zeros in this region.
    The argument is very similar in the $a,-b\gg0$ region, but with the
    \(
        \alpha^2 \hat\kappa(\alpha^2\la)
    \)
    dominant instead.
    Therefore, outside some disc $B(0,R)$, the zeros of $\Delta$ are confined to semistrips of some finite width $2w$ about the rays $-\ri\alpha^j\RR^+$, which proves the lemma.
    
    Suppose $\ri\la=a+b\ri$ with $a,b\gg0$ and insist $0<\delta\ll1$ so that $\kappa$ is continuous on at least $[1-\delta,1]$.
    Then
    \[
        \alpha \hat\kappa(\alpha\la) = \frac{\ri}{\la} \left( \kappa(1)\re^{-\ri\alpha\la} - \kappa(1-\delta)\re^{-\ri\alpha\la(1-\delta)} - \int_{1-\delta}^1 \re^{-\ri\alpha\la y} \D\kappa(y) \right) + \alpha\int_{0}^{1-\delta} \re^{-\ri\alpha\la y} \kappa(y)\D y.
    \]
    Hence
    \begin{multline*}
        \abs{\alpha\hat\kappa(\alpha \la)}
        \geq
        \frac{\re^{\frac{a+\sqrt3b}{2}}}{\sqrt{a^2+b^2}}\left[ \kappa(1)
        - \re^{-\delta\frac{a+\sqrt3b}{2}}\kappa(1-\delta) - \max_{1-\delta\leq y_1<y_2\leq1}\abs{\kappa(y_2)-\kappa(y_1)}  \right. \\
        \left. - \re^{-\delta\frac{a+\sqrt3b}{2}} \sqrt{a^2+b^2} \max_{y\in[0,1-\delta]}\kappa(y)
        \right].
    \end{multline*}
    Because $\kappa$ is of bounded variation, the second maximum exists.
    Because $\kappa$ is continuous at $1$, the first maximum approaches $0$ as $\delta\to0$.
    Hence, there exists $R,w>0$ such that, for all $a>R$ and $b>w$, there exists $\delta>0$ for which the $\kappa(1)$ term dominates the others:
    \[
        \abs{\alpha\hat\kappa(\alpha \la)}
        \geq
        \frac{\re^{\frac{a+\sqrt3b}{2}}}{\sqrt{a^2+b^2}}\left[ \frac{\kappa(1)}2\right],
    \]
    say.
    But
    \[
        \abs{\Delta(\la)} \geq \abs{\alpha\hat\kappa(\alpha \la)} - \abs{\hat\kappa(\la)} -  \abs{\alpha^2\hat\kappa(\alpha^2 \la)},
    \]
    and the latter two terms are bounded by
    \[
        \max_{y\in[0,1]}\abs{\kappa(y)} \qquad \mbox{and} \qquad \max_{y\in[0,1]}\abs{\kappa(y)}\max\left\{\re^{\frac{a-\sqrt3b}{2}},1\right\},
    \]
    respectively.
    Hence, possibly after further increasing one or both of $R$ and $w$, it must be that $\Delta(\la)\neq0$.
\end{proof}

\begin{proof}[Proof of lemma~\ref{lem:FullSoln.OneNonlocal.NondataDecays}]
    Note that, if $t'\geq t$ and $\la\in\clos(D^\pm)$, then $\Re[-\ri\la^3(t'-t)]\leq0$, so the exponential factors $\exp[-\ri\la^3(t'-t)]=\bigoh(1)$, uniformly in $\arg(\la)$ and may be discounted.
    
    Note that $D_R^+ \cup D_R^-$ has three connected components, of which one is $D_R^+$ and the other two comprise $D_R^-$.
    We label these components with the subscript $1, 2, 3$ counting anticlockwise from the positive real axis so that $D_{R \Mspacer 1}^+=D_R^+$.
    With $\kappa$ as defined in the proof of lemma~\ref{lem:FullSoln.OneNonlocal.ZerosLocus}, using the criteria on $K$ to integrate by parts,
    \begin{multline*}
        \Delta(\la) = \frac\ri\la\left[ \kappa(1)\re^{-\ri\la} - \kappa(1-\delta)\re^{-\ri\la(1-\delta)} - \int_{1-\delta}^1\re^{-\ri\la y}\D\kappa(y) \right]
        + \int_0^{1-\delta}\re^{-\ri\la y} \kappa(y) \D y \\
        + \sum_{j=1}^2 \alpha^j \int_0^1 \re^{-\ri\alpha^j\la y} \kappa(y) \D y,
    \end{multline*}
    where $0<\delta\ll1$ is such that $\kappa$ is continuous on at least $[1-\delta,1]$.
    Consider $\la\to\infty$ from within $\clos(D^+)$ in this expression.
    Both the integrals in the final sum are $\bigoh(1)$ and both the second term of the bracket and the integral from $0$ to $1-\delta$ are $\bigoh(\exp(-\ri\la[1-\delta]))$.
    The third term in the bracket is bounded by
    \[
        \abs{\re^{-\ri\la}} \max_{1-\delta\leq y_1 < y_2\leq1}\abs{\kappa(y_2)-\kappa(y_1)},
    \]
    and the maximum has limit zero as $\delta\to0$ because $\kappa$ is continuous on $[1-\delta,1]$.
    Therefore, by fixing $\delta>0$ small enough, we can ensure that this maximum is no greater than $\kappa(1)/2$.
    Then the leading order term has nonzero coefficient, and all other terms are relatively decaying.
    Using the rotational symmetry of $\Delta$, we can obtain similar estimates for the behaviour of $\Delta$ in other sectors.
    Indeed,
    \[
        \Delta(\la) =
        \begin{cases}
            \Theta\left(\re^{-\ri\la}/\la\right) & \mbox{as } \la\to\infty \mbox{ from within } \clos\left(D_R^+\right), \\
            \Theta\left(\re^{-\ri\alpha^2\la}/\la\right) & \mbox{as } \la\to\infty \mbox{ from within } \clos\left(D_{R\Mspacer 2}^-\right), \\
            \Theta\left(\re^{-\ri\alpha\la}/\la\right) & \mbox{as } \la\to\infty \mbox{ from within } \clos\left(D_{R\Mspacer 3}^-\right).
        \end{cases}
    \]
    
    The dominant term in
    \begin{equation} \label{eqn:FullSoln.OneNonlocal.NondataDecays.proof1}
        \sum_{j=0}^2 \alpha^j \nu_1(\alpha^j\la)
    \end{equation}
    is the $j=2$ term in $\clos\left(D_{R\Mspacer 2}^-\right)$ and the $j=1$ term in $\clos\left(D_{R\Mspacer 3}^-\right)$.
    However, the symmetry of expression~\eqref{eqn:FullSoln.OneNonlocal.NondataDecays.proof1} means that we may instead (and more notationally conveniently so) show decay of only $\nu_1(\la)/\Delta(\la)$ as $\la\to\infty$ from within $\clos\left(D_R^+\right)$ and thereby conclude the latter claim of the lemma.
    
    Select any $0<\delta\ll1$ for which $K$ is continuous on $[0,\delta]$.
    Integration by parts implies
    \begin{multline} \label{eqn:FullSoln.OneNonlocal.NondataDecays.proof1a}
        \nu_1(\la)
        =
        \frac{1}{\ri\la} \bigg[ K(\delta) \re^{\ri\la\delta} \hat{u}(\la;t';\delta,1) - K(0) \hat{u}(\la;t';0,1) - \int_0^\delta \re^{\ri\la y} \hat{u}(\la;t';y,1) \D K(y) \\
        + \int_0^\delta K(y)u(y;t') \D y \bigg] + \int_\delta^1 K(y) \re^{\ri\la y} \hat{u}(\la;t';y,1) \D y.
    \end{multline}
    Integrating by parts, for all $y\in[0,1)$,
    \begin{align}
        \notag
        \abs{\hat{u}(\la;t';y,1)}
        &= \abs{\frac\ri\la\left[ \re^{-\ri\la}u(1,t') - \re^{-\ri\la y}u(y,t') - \int_y^1 \re^{-\ri\la z} u_x(z,t')\D z \right]} \\
        \notag
        &\leq \frac1{\abs\la} \left[ \abs{u(1,t')}\abs{\re^{-\ri\la}} + \abs{u(y,t')}\abs{\re^{-\ri\la y}} + (1-y)\abs{\re^{-\ri\la}} \max_{z\in[y,1]}\abs{u_x(z,t')} \right] \\
        &\leq M\abs{\frac{\re^{-\ri\la}}\la},
        \label{eqn:FullSoln.OneNonlocal.NondataDecays.proof1b}
    \end{align}
    for some $M>0$, which can be chosen uniformly in $y$.
    This immediately implies that the first and second terms in the bracket on the right of equation~\eqref{eqn:FullSoln.OneNonlocal.NondataDecays.proof1a} are $\bigoh(\la^{-1}\re^{-\ri\la})$.
    But also
    \[
        \abs{\int_0^\delta \re^{\ri\la y} \hat{u}(\la;t';y,1) \D K(y)} \leq M\abs{\frac{\re^{-\ri\la}}\la} V_0^\delta(K),
    \]
    where $V_0^\delta(K)$ represents the total variation of $K$ over $[0,\delta]$, so the third term in the bracket is also $\bigoh(\la^{-1}\re^{-\ri\la})$.
    The fourth term in the bracket is independent of $\la$.\footnote{This term is known by equation~\eqref{eqn:INVPwithBC.NC} to be $h_2(t')$, but we need not use that fact here.}
    The final term on the right of equation~\eqref{eqn:FullSoln.OneNonlocal.NondataDecays.proof1a} is $\bigoh(\re^{-\ri\la(1-\delta)})$.
    Hence, overall, $\nu_1(\la)/\Delta(\la) = \bigoh(\la^{-1})$.
    
    It remains only to establish the first claim of the lemma, in which we study $\la\to\infty$ within $\clos\left(D_R^+\right)$.
    Note that
    \[
        \frac{\re^{-\ri\la}}{\Delta(\la)} \int_0^1 K(y) \re^{\ri\la y} \D y = 1 + \bigoh(\la^{-1}).
    \]
    Therefore,
    \begin{align}
        \notag
        \nu_0(\la;t')
        &= \frac{\re^{-\ri\la}}{\Delta(\la)} \left( \sum_{j=0}^2 \alpha^j \nu_1(\alpha^j\la) - \int_0^1 K(y) \re^{\ri\la y} \hat{u}(\la;t') \D y \right) + \lindecayla \\
        \notag
        &= \frac{\re^{-\ri\la}}{\Delta(\la)} \left( \sum_{j=1}^2 \alpha^j \int_0^1 K(y) \re^{\ri\alpha^j\la y} \hat{u}(\alpha^j\la;t';y,1) \D y \right. \\ &\hspace{13em} \left. \vphantom{\sum_{j=1}^2} - \int_0^1 K(y) \re^{\ri\la y} \hat{u}(\la;t';0,y) \D y \right) + \lindecayla.
        \label{eqn:FullSoln.OneNonlocal.NondataDecays.proof2}
    \end{align}
    Using similar calculations to those justifying inequalities~\eqref{eqn:FullSoln.OneNonlocal.NondataDecays.proof1b}, we find that
    \begin{align*}
        \re^{\ri\alpha^j\la y} \hat{u}(\alpha^j\la;t';y,1) &= \bigoh(\la^{-1}), \\
        \re^{\ri\la y} \hat{u}(\alpha^j\la;t';0,y) &= \bigoh(\la^{-1}),
    \end{align*}
    both uniformly in $y$.
    Hence both integrals in equation~\eqref{eqn:FullSoln.OneNonlocal.NondataDecays.proof2} are $\bigoh(\la^{-1})$, and $\nu_0=\bigoh(1)$.
\end{proof}

\begin{rmk}
    If we assume that $K$ is not just of bounded variation but continuously differentiable, then we can improve on the information lemma~\ref{lem:FullSoln.OneNonlocal.NondataDecays} provides about the behaviour of $\nu_0$.
    Firstly, integrating by parts on the whole interval $[0,1]$ instead of just $[1-\delta,1]$, we find the leading order term in $\Delta$ explicitly:
    \[
        \Delta(\la) = \frac{\ri}{\la}K(0) \re^{-\ri\la} + \bigoh\left(\la^{-2}\re^{-\ri\la}\right) \qquad \mbox{as } \la\to\infty \mbox{ within } \clos(D^+).
    \]
    We proceed from equation~\eqref{eqn:FullSoln.OneNonlocal.NondataDecays.proof2}.
    Integrating by parts, we find that, for $j=1,2$,
    \begin{multline*}
        \int_0^1 K(y) \re^{\ri\alpha^j\la y} \int_y^1 \re^{-\ri\alpha^j\la x} u(x,t') \D x \D y
        = \frac{1}{\ri\alpha^j\la} \left( - K(0) \int_0^1 \re^{-\ri \alpha^j\la x} u(x,t') \D x \right. \\
        + \left. \int_0^1 K(y) u(y,t') \D y
        - \int_0^1 \re^{\ri \alpha^j\la y} K'(y) \int_y^1 \re^{-\ri\alpha^j\la x} u(x,t') \D x \D y \right)
    \end{multline*}
    The first and third integrals on the right are both $\lindecayla$.
    The other is $h_2(t')$ by equation~\eqref{eqn:INVPwithBC.NC}.
    Therefore, equation~\eqref{eqn:FullSoln.OneNonlocal.NondataDecays.proof2} simplifies to
    \begin{equation} \label{eqn:FullSoln.OneNonlocal.NondataDecays.proof3}
        \nu_0(\la;t') = \frac{\re^{-\ri\la}}{\Delta(\la)} \left( \frac{2 h_2(t')}{\ri\la} - \int_0^1 K(y) \re^{\ri\la y} \int_0^y \re^{-\ri\la x} u(x,t') \D x \D y \right) + \lindecayla.
    \end{equation}
    To obtain the leading order behaviour of the remaining integral in equation~\eqref{eqn:FullSoln.OneNonlocal.NondataDecays.proof3}, we again integrate by parts:
    \begin{multline*}
        \int_0^1 K(y) \re^{\ri\la y} \int_0^y \re^{-\ri\la x} u(x,t') \D x \D y
        = \frac{1}{\ri\la} \left( K(1) \re^{\ri\la} \int_0^1 \re^{-\ri \la x} u(x,t') \D x - \int_0^1 K(y) u(y,t') \D y \right. \\
        - \left. \int_0^1 \re^{\ri \la y} K'(y) \int_0^y \re^{-\ri\la x} u(x,t') \D x \D y \right).
    \end{multline*}
    The first and third parenthetical terms are both $\lindecayla$ and the second term is known.
    Substituting into equation~\eqref{eqn:FullSoln.OneNonlocal.NondataDecays.proof3}, we find that
    \[
        \nu_0(\la;t') = \frac{\re^{-\ri\la}}{\Delta(\la)\ri\la} 3h_2(t') + \lindecayla = \frac{-3h_2(t')}{K(0)} + \lindecayla.
    \]
\end{rmk}

\begin{rmk}
    The D to N map arguments used, in conjunction with an evaluation of the global relation at $[0,1]$, an evaluation $z=1$ and integration over $y\in[0,1]$.
    This was arbitrary, in the sense that one could have instead employed an evaluation at $y=0$ and integration over $z\in[0,1]$.
    We emphasize that the three equations derived from the global relation thus form a system of rank $2$.
    However, problem~\eqref{eqn:INVPwithBC} admits a simpler D to N map when using our selection of evaluations, because it makes the D to N map separate into a system of three equations instead of the full six.
\end{rmk}

\begin{rmk}
    It is reasonable to attempt to extend the above presented arguments to a problem with no boundary conditions but three nonlocal conditions, each with a different weight $K_0,K_1,K_2$.
    Unfortunately, this is rather difficult, because the direct analogue of lemma~\ref{lem:FullSoln.OneNonlocal.NondataDecays}, appears to be false.
    Indeed, to obtain formulae for each of $f_0(\la;1),f_1(\la;1),f_2(\la;1)$, requires using versions of the nonlocal global relation~\eqref{eqn:nonlocalGR} with each weight $K_k$, introducing six new unknowns instead of two.
    Therefore, the $\la\mapsto\alpha^j\la$ maps are required for each nonlocal global relation, resulting in a full rank system of nine equations in nine unknowns.
    When solving this system via Cramer's rule, one finds exponentials $\re^{-2\ri\alpha^j\la}$ in the numerators, but only $\re^{-\ri\alpha^j\la}$ in the denominator, so the ratios are unbounded on the relevant sectors.
    It is an open question whether problems like this are illposed, wellposed but not amenable to a Fokas transform method approach, or open to anlaysis via an alternative version of the Fokas transfrom method.
    For example, it may be possible to adapt the method, outlined in~\cite[\S5.1--5.2]{MS2018a}, of understanding nonlocal value problems as weak-$\star$ limits of multipoint value problems, but then uniqueness of the solution must be proved by other means.
\end{rmk}

\begin{rmk}
    In~\cite{Smi2015a,FS2016a,PS2016a,ABS2022a,Smi2023a}, it was shown how to understand and even construct the Fokas transform method for IBVP, and in particular the objects analagous to $N_0,N_1$, in terms of the characteristic matrix of the classical (Lagrange) adjoint of the spatial differential operator.
    Because the spatial differential we study in this paper has a nonlocal condition, it does not have a classical adjoint, so it is not clear what the analagous construction should be.
    More examples like this one, or an analysis via weak-$\star$ limits of multipoint operators (which do have classical adjoints~\cite{Loc1973a,ABS2022a}) may be illustrative.
\end{rmk}

\begin{rmk}
    Because the Airy equation is third order, it behaves substantially differently if time (equivalently, space) is run in the other direction.
    Indeed, it is expected that simply replacing equation~\eqref{eqn:INVPwithBC.PDE} with $[\partial_t-\partial_x^3]u(x,t)=0$ would make INVP~\eqref{eqn:INVPwithBC} illposed.
    It is known that separated IBVP for the Airy equation require exactly one boundary condition specified at the left and two at the right in order to be wellposed, and the alternative must have two at the left and one at the right~\cite{Pel2004a}.
    In this context, the requirement in theorem~\ref{thm:FullSoln.OneNonlocal.Solution.D} that $K$ be nonzero at $0$ is not simply a technical imposition inherited from its lemmata but a fundamental requirement for wellposedness.
    For an INVP with the alternative PDE to be wellposed, it is expected that not only would the $K$ nonzero at $0$ requirement remain, but one of the boundary conditions at $x=1$ would have to be substituted for a further boundary condition at $x=0$.
    A full characterization of wellposedness for third order IBVP including those with boundary conditions coupling between the two ends has still not been obtained, despite indications that it is related to the criteria for Birkhoff regularity~\cite{Loc2000a,Smi2012a,ABS2022a}.
    Therefore, we relegate also to later work an investigation of how wellposedness of INVP for the Airy equation is affected by selection of different boundary and nonlocal conditions.
\end{rmk}

\begin{rmk}
    This problem can be understood as related to that of a linearization of the physical problem of unidirectional waves in shallow water, where the surface elevation is difficult to measure at a point $x=0$ but relatively easy to measure on average over an interval, using a measuring device with sensitivity $K(x)$.
    This measurement may be related to that obtained from a pressure plate.
    In the small amplitude linearization of the Korteweg de Vries equation, equation~\eqref{eqn:INVPwithBC.PDE} would also have a $u_x$ term.
    Including that term would make the formulae above more complicated but, guided by the results for IBVP, we do not expect its inclusion would significantly change the character of the results.
    
    Related problems have $K$ the sum of a function of bounded variation and a one sided delta distribution or derivative delta distribution at $x=0$.
    This effectively transforms the nonlocal condition into a hybrid boundary nonlocal condition.
    Such problems have applications in the feedback stabilizability and boundary controllability of the system via backstepping; see~\cite{KS2008a} for a general survey and, including the additional $u_x$ term in the PDE,~\cite{OB2019a}.
    We expect that such problems may be studied via the means presented above, but the analysis in the proofs of results analogous to lemmata~\ref{lem:FullSoln.OneNonlocal.ZerosLocus} and~\ref{lem:FullSoln.OneNonlocal.NondataDecays} would be slightly simplified because the presence of the $\partial_x^ku(0,t)$ term in the hybrid nonlocal boundary condition can provide an extra $\lindecayla$ decay in some of the numerator integrals.
\end{rmk}

\begin{rmk}
    Having presented the full unified transform method for INVP~\eqref{eqn:INVPwithBC}, some discussion is warranted of how this relates to the method for the heat equation presented in~\cite{MS2018a}.
    
    To emphasize the ready adaptability of the method, we have been careful to present the broader argument in section~\ref{sec:FullSoln} in a manner closely paralleling the heat equation paper.
    The arguments to justify lemmata~\ref{lem:FullSoln.OneNonlocal.ZerosLocus} and~\ref{lem:FullSoln.OneNonlocal.NondataDecays} require some more careful bounds for the third order problem.
    This arises from the geometric complications of the exponential sum
    $\sum_{k=0}^1\sum_{j=0}^2\exp((-1)^k\ri\alpha^j\la)$
    having 6 terms which may be dominant as $\la\to\infty$ in various sectors, while the corresponding
    $\sum_{k=0}^1\sum_{j=0}^1\exp((-1)^k\ri\alpha_2^j\la)$
    (in which $\alpha_2=-1$ is a primitive square root of unity)
    arising for the heat equation has only two.
    But this point of contrast is one of detail rather than representing a fundamental divergence of the argument.
    
    The most striking point is one of comparison rather than contrast.
    In lemma~\ref{lem:FullSoln.OneNonlocal.ZerosLocus}, the first asymptotic statement is of $\bigoh(1)$ behaviour rather than $\lindecayla$, and this is reflected in equation~(2.3) of~\cite{MS2018a}.
    These two asymptotic bounds are very different to the $\lindecayla$ bounds one typically obtains in all relevant sectors in the equivalent lemmata for IBVP.
    They necessitate the more careful application of Jordan's lemma in which the $\exp(\ri\la x)$ factor is split, which works if and only if the relevant sector boundaries are nonparallel with the real line.
    It remains to be investigated whether this is a fundamental feature of the unified transform method for INVP, and how it might affect wellposedness for problems with different boundary and nonlocal problems.
\end{rmk}

\section{Time periodic problems for the Airy equation} \label{sec:Periodic}

In this section, we study the time periodic analogue of problem~\eqref{eqn:INVPwithBC}.
We modify the arguments of~\cite{FvdW2021a,FPS2022a} for the present setting, informed by the nonlocal global relation concepts that were introduced in~\cite{MS2018a} and adapted to the Airy equation above.

\subsection{Time periodic problem} \label{ssec:Periodic.Problems}

For $T>0$, we say that function $d:[0,\infty)\to \CC$ is \emph{$T$ periodic} if it can be represented via its exponential Fourier series
\[
    d(t) = \sum_{n\in\ZZ} D_n \re^{\ri n\omega t}, \qquad
    \omega=\frac {2\pi}T,
\]
almost everywhere.
Occasionally, we tacitly extend the domain of $d$ to $\RR$ so that it is periodic in the usual sense.
This is not intended to suggest that all of the problems we study have solutions for negative time; it is merely a notational convenience.

We shall study the following problem, which is the time periodic analogue of problem~\eqref{eqn:INVPwithBC}, because the initial condition has been replaced with a time periodicity assumption.
\begin{prob*}[Time periodic finite interval problem for the Airy equation with one nonlocal condition]
    \begin{subequations} \label{eqn:PerNVPwithBC}
    \begin{align}
        \label{eqn:PerNVPwithBC.PDE} \tag{\theparentequation.PDE}
        [\partial_t+\partial_x^3] q(x,t) &= 0 & (x,t) &\in (0,1) \times (0,\infty), \\
        \label{eqn:PerNVPwithBC.IC} \tag{\theparentequation.Per}
        q(x,t) &= q(x,t+T) & (x,t) &\in[0,1] \times [0,\infty), \\
        \label{eqn:PerNVPwithBC.BC0} \tag{\theparentequation.BC0}
        q(1,t) &= h_0(t) & t &\in[0,\infty), \\
        \label{eqn:PerNVPwithBC.BC1} \tag{\theparentequation.BC1}
        q_x(1,t) &= h_1(t) & t &\in[0,\infty), \\
        \label{eqn:PerNVPwithBC.NC} \tag{\theparentequation.NC}
        \int_0^1 K(y)q(y,t)\D y &= h_2(t) & t &\in[0,\infty),
    \end{align}
    in which $h_k$, and $K$ are sufficiently smooth, $h_k(t)=h_k(t+T)$ for all $t\in[0,\infty)$, and $T>0$ is fixed.
    \end{subequations}
\end{prob*}

In~\cite[proposition~2]{FPS2022a}, the asymptotically valid D to N map was derived for an asymptotically time periodic problem similar to~\eqref{eqn:INVPwithBC}.
The differences being that the PDE was allowed to have any linear spatial differential operator, but only boundary conditions were admitted, not nonlocal conditions.
Here, we specialise to the Airy equation, but admit the more complicated nonlocal condition~\eqref{eqn:INVPwithBC.NC}.
More significantly, we streamline the argument by focussing directly on the problem for $q$, which is assumed to be truly periodic, not just asymptotically so.

\subsection{The periodic relations} \label{ssec:Periodic.Relations}
Because $q$ satisfies the same PDE as did $u$ in~\S\ref{sec:FullSoln}, $q$ also satisfies equation~\eqref{eqn:PreGR}, provided $0\leq y \leq z \leq 1$.
When evaluated at $y=0$, $z=1$, this equation is named ``the $Q$ equation'' in~\cite{FvdW2021a}, after a change of notation $Q(\la) = -\hat{q}(\la;t)$.
To emphasize the simpler derivation of this equation presented herein (and to preserve the symbol $Q$ for later use), we do not adopt this notation.
To extend the arguments of~\cite{FPS2022a} to nonlocal problems, we shall employ both the $[0,1]$ and the $\int[y,1]$ versions of equation~\eqref{eqn:PreGR}:
\begin{align}
    \notag
    \left[ \partial_t - \ri\la^3 \right] \hat{q}(\la;t)
    &=
    \left[ \partial_{xx} q(0,t) + \ri\la\partial_xq(0,t) - \la^2q(0,t) \right] \\
    \label{eqn:PrePeriodicRelation}
    &\hspace{4em} - \re^{-\ri\la} \left[ \partial_{xx} q(1,t) + \ri\la\partial_xq(1,t) - \la^2q(1,t) \right], \\
    \notag
    \left[ \partial_t - \ri\la^3 \right] \int_0^1 \re^{\ri\la y} \hat{q}(\la;t;y,1) K(y) \D y
    &=
    \int_0^1 \left[ \partial_{xx} q(y,t) + \ri\la\partial_xq(y,t) - \la^2q(y,t) \right] K(y) \D y \\
    \label{eqn:PreNonlocalPeriodicRelation}
    &\hspace{-3.2em} - \int_0^1 \re^{-\ri\la(1-y)} K(y) \D y \left[ \partial_{xx} q(1,t) + \ri\la\partial_xq(1,t) - \la^2q(1,t) \right],
\end{align}
mirroring the approach of~\S\ref{sec:FullSoln}, wherein both $[0,1]$ and $\int[y,1]$ versions of the global relation~\eqref{eqn:GR} were used.

Because $q$ is assumed time periodic, so also are its spatial Fourier transform and its boundary values.
Hence, we may express all terms in the above equations using their exponential Fourier series over the interval $[0,T]$.
\begin{equation} \label{eqn:Periodic.Notation}
\begin{gathered}
    \sum_{n\in\ZZ} G_n^{j} \re^{\ri n \omega t} = g^{j}(t) = \partial_x^j q(0,t), \qquad\qquad
    \sum_{n\in\ZZ} \Gamma_n^{j} \re^{\ri n \omega t} = \gamma^{j}(t) = \partial_x^j q(1,t), \\
    \sum_{n\in\ZZ} H_n^{k} \re^{\ri n \omega t} = h_k(t), \qquad\qquad
    \sum_{n\in\ZZ} Q_n(\la) \re^{\ri n \omega t} = \hat{q}(\la;t), \\
    \sum_{n\in\ZZ} P_n(\la) \re^{\ri n \omega t} = p(\la;t) = \int_0^1 \re^{\ri\la y} K(y)\hat{q}(\la;t,y,1) \D y, \\
    \sum_{n\in\ZZ} A_n^j \re^{\ri n \omega t} = \Msups{a}{}{}{j}{k}(t) = \int_0^1 K(y) \partial_x^j q(y,t) \D y,
\end{gathered}
\end{equation}
where each Fourier coefficient may be calculated as
\[
    \Phi_n = \frac1T\int_0^T \phi(t) \re^{\ri n \omega t} \D t.
\]
In the above notation, $\omega=2\pi/T$ is the angular frequency, $g$ are the left boundary values, $\gamma$ the right boundary values, $h$ the boundary data, and $a$ are nonlocal values.
In problem~\eqref{eqn:PerNVPwithBC}, $a^0$ is specified via nonlocal condition~\eqref{eqn:PerNVPwithBC.NC}, while $a^1$, $a^2$ are unknown.
By the Fourier basis property, equality of Fourier series is equivalent to equality of corresponding Fourier coefficients.
Therefore, equations~\eqref{eqn:PrePeriodicRelation} and~\eqref{eqn:PreNonlocalPeriodicRelation} simplify to, for all $n\in\ZZ$ and all $\la\in\CC$,
\begin{align}
    \label{eqn:PeriodicRelation}
    (\ri n\omega - \ri\la^3) Q_n(\la) &= \left[G_n^2 + \ri\la G_n^1 - \la^2 G_n^0\right] - \re^{-\ri\la} \left[\Gamma_n^2 + \ri\la \Gamma_n^1 - \la^2 \Gamma_n^0\right], \\
    \label{eqn:NonlocalPeriodicRelation}
    (\ri n\omega - \ri\la^3) P_n(\la)
    &= \left[A_n^2 + \ri\la A_n^1 - \la^2 A_n^0\right]
    - \re^{-\ri\la}\widehat{K}(-\la) \left[\Gamma_n^2 + \ri\la \Gamma_n^1 - \la^2 \Gamma_n^0\right].
\end{align}
We refer to these equations as the \emph{periodic relation} and \emph{nonlocal periodic relation}, respectively.
Other than the notational changes, in the derivation of these relations from equation~\eqref{eqn:PreGR}, the only step was to calculate exponential Fourier coefficients.

\begin{rmk}
    To aid solution of the INVP in~\S\ref{sec:FullSoln}, from equation~\eqref{eqn:PreGR} was derived the global relation on $[0,1]$~\eqref{eqn:GR} by solution of an ODE in time.
    The nonlocal global relation~\eqref{eqn:nonlocalGR} followed by taking the same integrals against $K$.
    Thus, the (nonlocal) periodic relation is the analogue of the (nonlocal) global relation, but derived via temporal Fourier series expansion instead of solution of a temporal ODE.
    This is a reasonable approach because, lacking an initial condition in problem~\eqref{eqn:PerNVPwithBC}, it is impossible to solve the temporal ODE, but periodicity means that temporal Fourier expansion is possible.
    Moreover, the fact that the global relations could be used to construct D to N maps for the INVP suggests the periodic relations may be valuable in finding D to N maps for the periodic nonlocal value problems.
\end{rmk}

\subsection{Periodic D to N map} \label{ssec:Periodic.DtoN}
We construct the D to N map from the periodic relation~\eqref{eqn:PeriodicRelation} and the nonlocal periodic relation~\eqref{eqn:NonlocalPeriodicRelation}.
Along with the boundary values $G_n^k$ and $\Gamma_n^k$, these equations feature, within $Q_n$ and $P_n$, the unknown spatial Fourier transforms of $q$.
Construction of the periodic D to N map requires elimination of the terms $P_n$ and $Q_n$.
In our study of INVP in~\S\ref{sec:FullSoln}, $\hat{u}(\la;t)$ was treated as if it were data and shown only at the conclusion of the argument not to contribute to the derived solution formula.
Fortunately, for the periodic nonlocal value problems, a simpler approach is possible.

Because the coefficient $(\ri n\omega - \ri\la^3)$ of $Q_n$ and $P_n$ in equations~\eqref{eqn:PeriodicRelation}--\eqref{eqn:NonlocalPeriodicRelation} has zeros,
$Q_n$ and $P_n$ may be eliminated by selecting the particular $\la$ values
\begin{equation} \label{eqn:defn.la_n}
    \la=\alpha^j \la_n,
    \qquad\mbox{where}\qquad
    \la_n = \sgn(n) \sqrt[3]{\abs{n}\omega}, \qquad j\in\{0,1,2\}, \qquad n\in\ZZ,
\end{equation}
and $\alpha=\exp(2\pi\ri/3)$. The finitude of $P_n(\la)$ and $Q_n(\la)$ for all $\la\in\CC$ follows from the fact that they are defined as compact integrals of $q$, which is assumed to be integrable in both space and time.
Thus, the left sides of the (nonlocal) periodic relations vanish, leaving equations linking only the boundary and nonlocal values.
We shall solve the resulting system.

For notational convenience, we define $\kappa(y)=K(1-y)$ so that $\hat\kappa(\la)=\re^{-\ri\la}\widehat{K}(-\la)$.
The nonlocal periodic relation~\eqref{eqn:NonlocalPeriodicRelation} reduces to
\begin{equation} \label{eqn:OneNonlocal.NonlocalPeriodicRelation.Reduced}
    (\ri n\omega - \ri\la^3) P_n(\la) =
    \left[\Msup{A}{n}{}{2} + \ri\la \Msup{A}{n}{}{1} - \la^2 H_n^2\right]
    - \hat\kappa(\la) \left[\Gamma_n^2 + \ri\la H_n^1 - \la^2 H_n^0\right].
\end{equation}
For each nonzero integer $n$, the left side of this equation has three zeros, as given by equation~\eqref{eqn:defn.la_n}.
Therefore,
\begin{equation} \label{eqn:OneNonlocal.NonlocalPeriodicRelation.System}
    \begin{pmatrix}
        1 & \ri\la_n & -\hat\kappa(\la_n) \\
        1 & \ri\alpha\la_n & -\hat\kappa(\alpha\la_n) \\
        1 & \ri\alpha^2\la_n & -\hat\kappa(\alpha^2\la_n)
    \end{pmatrix}
    \begin{pmatrix}
        \Msup{A}{n}{}{2} \\ \Msup{A}{n}{}{1} \\ \Gamma_n^2
    \end{pmatrix}
    =
    \begin{pmatrix}
        \mathcal{N}_n(\la_n) \\
        \mathcal{N}_n(\alpha\la_n) \\
        \mathcal{N}_n(\alpha^2\la_n)
    \end{pmatrix},
\end{equation}
where
\[
    \mathcal{N}_n(\la) = \la^2 H_n^2 + \ri\la\hat\kappa(\la) H_n^1 - \la^2\hat\kappa(\la) H_n^0.
\]
If instead $n=0$, then system~\eqref{eqn:OneNonlocal.NonlocalPeriodicRelation.System} is guaranteed to be rank deficient.
However, differentiating equation~\eqref{eqn:OneNonlocal.NonlocalPeriodicRelation.Reduced} twice with respect to $\la$ and evaluating at $\la=0$ yields an equation for $\Gamma_0^2$.
Therefore,
\begin{equation} \label{eqn:OneNonlocal.NonlocalPeriodicRelation.Gamma.n2}
    \Gamma_n^2 =
    \begin{cases}
        2\left( \frac{\D}{\D\la}\hat\kappa(0) H_0^1 - \hat\kappa(0) H_0^0 + H_0^2 \right) / \frac{\D^2}{\D\la^2}\hat\kappa(0) & \mbox{if } n = 0, \\
        \sum_{j=0}^2 \alpha^j \mathcal{N}_n(\alpha^j\la_n) / \sum_{j=0}^2 \alpha^j \hat\kappa(\alpha^j\la_n), & \mbox{otherwise.}
    \end{cases}
\end{equation}
The exceptional case is if either of the denominators in equation~\eqref{eqn:OneNonlocal.NonlocalPeriodicRelation.Gamma.n2} is zero.
In that case, for generic $h_k$, no solution for $\Gamma_n^2$ exists.
Therefore, we have found a necessary condition for periodic nonlocal value problem~\eqref{eqn:PerNVPwithBC} to have a solution:
\begin{crit} \label{crit:Periodic.OneNonlocal.Solubility}
    Both $\frac{\D^2}{\D\la^2}\hat\kappa(0)$ and, for all $n\in\ZZ\setminus\{0\}$, $\sum_{j=0}^2 \alpha^j \hat\kappa(\alpha^j\la_n)$ are nonzero.
\end{crit}

We proceed under our original assumption that $q$ exists, which implies that equation~\eqref{eqn:OneNonlocal.NonlocalPeriodicRelation.Gamma.n2} is valid.
Evaluating the periodic relation~\eqref{eqn:PeriodicRelation} at the $\la$ values~\eqref{eqn:defn.la_n}, we determine that
\begin{equation} \label{eqn:OneNonlocal.PeriodicRelation.System}
    \begin{pmatrix}
        1 & \ri\la_n & -\la_n^2 \\
        1 & \ri\alpha\la_n & -\alpha^2\la_n^2 \\
        1 & \ri\alpha^2\la_n & -\alpha\la_n^2
    \end{pmatrix}
    \begin{pmatrix}
        G_n^2 \\ G_n^1 \\ G_n^0
    \end{pmatrix}
    =
    \begin{pmatrix}
        \mathfrak{N}_n(\la) \\
        \mathfrak{N}_n(\alpha\la) \\
        \mathfrak{N}_n(\alpha^2\la)
    \end{pmatrix},
\end{equation}
where
\[
    \mathfrak{N}_n(\la) = \re^{-\ri\la} \left[\Gamma_n^2 + \ri\la H_n^1 - \la^2 H_n^0\right].
\]
System~\eqref{eqn:OneNonlocal.PeriodicRelation.System} has determinant $-3\sqrt3\la_n^3$, so is full rank for all $n\neq0$.
To determine the zero indexed left boundary Fourier coefficients, we use equations obtained by differentiating the periodic relation~\eqref{eqn:PeriodicRelation} 0, 1, and 2 times, respectively, and evaluating at $\la=0$.
The solution is
\begin{subequations} \label{eqn:Periodic.OneNonlocal.G}
\begin{align}
    G_n^2 &=
    \begin{cases}
        \Gamma_0^2 & \mbox{if } n=0, \\
        \frac13\sum_{j=0}^2\mathfrak{N}_n(\alpha^j\la_n) & \mbox{otherwise,}
    \end{cases} \\
    G_n^1 &=
    \begin{cases}
        H_0^1 - \Gamma_0^2 & \mbox{if } n=0, \\
        \frac1{3\ri\la_n}\sum_{j=0}^2\alpha^{2j}\mathfrak{N}_n(\alpha^j\la_n) & \mbox{otherwise,}
    \end{cases} \\
    G_n^0 &=
    \begin{cases}
        H_0^0 - H_0^1 + \frac12 \Gamma_0^2 & \mbox{if } n=0, \\
        \frac1{-3\la_n^2}\sum_{j=0}^2\alpha^{j}\mathfrak{N}_n(\alpha^j\la_n) & \mbox{otherwise.}
    \end{cases}
\end{align}
\end{subequations}

\subsection{Solution of the time periodic problem} \label{ssec:Periodic.Solution}

\begin{prop} \label{prop:Periodic.Solution.OneNonlocal}
    Suppose $q$ satisfies problem~\eqref{eqn:PerNVPwithBC} with generic data piecewise continuous $h_k$, and $K$ is such that criterion~\ref{crit:Periodic.OneNonlocal.Solubility} holds for $\kappa(y)=K(1-y)$.
    Then
    \[
        q(x,t) = \frac1{2\pi} \int_{-\infty}^\infty \re^{\ri\la x} \sum_{n\in\ZZ} \re^{\ri n \omega t} \frac{\left[ G_n^2 + \ri\la G_n^1 - \la^2 G_n^0 \right] - \re^{-\ri\la}\left[ \Gamma_n^2 + \ri\la H_n^1 - \la^2 H_n^0 \right]}{\ri n \omega - \ri\la^3} \D\la,
    \]
    where $H_n^j$ are defined in equations~\eqref{eqn:Periodic.Notation}, $\Gamma_n^2$ are defined in equation~\eqref{eqn:OneNonlocal.NonlocalPeriodicRelation.Gamma.n2}, and $G_n^j$ are defined in equations~\eqref{eqn:Periodic.OneNonlocal.G}.
\end{prop}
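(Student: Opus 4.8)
The plan is to build the representation from three facts that are essentially already in place: the periodic relation~\eqref{eqn:PeriodicRelation}, the periodic D to N formulae of \S\ref{ssec:Periodic.DtoN}, and the spatial Fourier inversion theorem. First I would fix $n\in\ZZ$ and $\la\in\CC$ with $\la\notin\{\la_n,\alpha\la_n,\alpha^2\la_n\}$ (so, in particular, $\la\neq0$ when $n=0$), use the boundary conditions~\eqref{eqn:PerNVPwithBC.BC0}--\eqref{eqn:PerNVPwithBC.BC1} to write $\Gamma_n^0=H_n^0$ and $\Gamma_n^1=H_n^1$, and divide the periodic relation~\eqref{eqn:PeriodicRelation} by its nonvanishing coefficient $\ri n\omega-\ri\la^3$ to get
\[
    Q_n(\la) = \frac{\left[G_n^2+\ri\la G_n^1-\la^2 G_n^0\right] - \re^{-\ri\la}\left[\Gamma_n^2+\ri\la H_n^1-\la^2 H_n^0\right]}{\ri n\omega-\ri\la^3}.
\]
Here $\Gamma_n^2$ is supplied by~\eqref{eqn:OneNonlocal.NonlocalPeriodicRelation.Gamma.n2} and $G_n^j$ by~\eqref{eqn:Periodic.OneNonlocal.G}: criterion~\ref{crit:Periodic.OneNonlocal.Solubility} is exactly the hypothesis that keeps the denominators in~\eqref{eqn:OneNonlocal.NonlocalPeriodicRelation.Gamma.n2} nonzero, and then~\eqref{eqn:OneNonlocal.PeriodicRelation.System} (of determinant $-3\sqrt3\la_n^3$ for $n\neq0$, with the $\la=0$ differentiated relations covering $n=0$) pins down $G_n^j$ uniquely in terms of the data $H_n^j$.

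Next I would sum the temporal Fourier series to obtain $\hat q(\la;t)=\sum_{n\in\ZZ}Q_n(\la)\re^{\ri n\omega t}$ for $\la$ off the cube roots $\alpha^j\la_n$, and then invert in $x$. For each fixed $t$, $\hat q(\la;t)=\int_0^1\re^{-\ri\la x}q(x,t)\D x$ is the spatial Fourier transform of the zero extension of the piecewise continuous function $q(\argdot,t)$, so the inversion theorem gives $q(x,t)=\frac1{2\pi}\int_{-\infty}^\infty\re^{\ri\la x}\hat q(\la;t)\D\la$ (interpreted, as throughout \S\ref{sec:FullSoln}, as a Cauchy principal value, with the mean of one-sided limits at any spatial jump). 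Substituting the Fourier series for $\hat q$ and then the ratio above for $Q_n$ yields exactly the claimed formula.

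The step I expect to be the main obstacle is making sense of the integrand, whose denominator $\ri n\omega-\ri\la^3$ has a simple zero at each $\la=\alpha^j\la_n$ for $n\neq0$ and a triple zero at $\la=0$ for $n=0$. The resolution is that $Q_n(\la)$ is entire in $\la$ --- up to the factor $1/T$ it is the integral over the compact set $[0,T]\times[0,1]$ of $\re^{-\ri n\omega t}\re^{-\ri\la x}q(x,t)$, an integrand entire in $\la$, and $q$ is integrable by hypothesis --- so the meromorphic ratio that agrees with $Q_n$ off those points can only have removable singularities there; equivalently, its numerator vanishes at each $\alpha^j\la_n$, and to third order at $\la=0$. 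But that vanishing is precisely the condition enforced by solving~\eqref{eqn:OneNonlocal.NonlocalPeriodicRelation.System}--\eqref{eqn:OneNonlocal.PeriodicRelation.System} and the differentiated relations for $\Gamma_n^2$ and $G_n^j$, so the D to N construction of \S\ref{ssec:Periodic.DtoN} is exactly what makes the integrand regular across the real axis. It then remains to justify convergence --- that $\sum_n Q_n(\la)\re^{\ri n\omega t}$ reproduces $\hat q(\la;t)$ in a mode compatible with the $\la$-integration, and that the resulting repeated sum and integral converge --- which I would obtain from the decay of $Q_n(\la)$ as $\abs{n}\to\infty$ (inherited from the decay of the Fourier coefficients $H_n^j$ of the piecewise continuous data together with the large-$\abs{n}$ asymptotics of the formulae in \S\ref{ssec:Periodic.DtoN}), combined with the $\bigoh(\la^{-1})$ decay of $\hat q(\la;t)$ along the real axis furnished by one spatial integration by parts, much as in \S\ref{sec:FullSoln}.
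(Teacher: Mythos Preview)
Your proposal is correct and follows essentially the same route as the paper: divide the periodic relation~\eqref{eqn:PeriodicRelation} by $\ri n\omega-\ri\la^3$ to obtain $Q_n(\la)$, sum in $n$ via the fourth of~\eqref{eqn:Periodic.Notation}, and apply spatial Fourier inversion, with the D to N formulae of \S\ref{ssec:Periodic.DtoN} supplying the unknown coefficients. Your treatment of the removable singularities at $\alpha^j\la_n$ is more explicit than the paper's (which only comments on the triple zero at $\la=0$), and your convergence sketch via decay of $Q_n$ and integration by parts in $\la$ is a minor variant of the paper's Parseval-plus-Riemann--Lebesgue argument, but the overall strategy is the same.
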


\begin{proof}
    The Fourier coefficients on the right of equation~\eqref{eqn:PeriodicRelation} are determined and the criterion justified above.
    Formally, the result follows from equation~\eqref{eqn:PeriodicRelation} by the fourth of equations~\eqref{eqn:Periodic.Notation} and the usual inversion theorem for the spatial Fourier transform.
    It remains only to justify the convergence of the series and integral.
    
    By Parseval's theorem, for each $k$, $(H_n^k)_{n\in\ZZ}$ is square summable.
    By definition, $\Gamma_n^2$ has dominant term $\lambda_n^2 H_n^0$, and $\lambda_n = \Theta(n^{1/3})$, so $(\Gamma_n^2/n)_{n\in\ZZ}$ is also square summable.
    Similarly, for each $j$, $(G_n^j/n)_{n\in\ZZ}$ are also square summable.
    Because $G_0^j$ and $\Gamma_0^2$ were constructed to ensure that the numerator has a zero of order 3 at $0$, and because the powers of $\la$ appearing in the numerator are each no greater than $2$, the convergence is uniform in $\la$, and the value of the series decays as $\la\to\pm\infty$.
    Hence, by the Riemann-Lebesgue lemma, the integral also converges.
\end{proof}

\begin{rmk}
    We have not investigated the failure of criterion~\ref{crit:Periodic.OneNonlocal.Solubility}.
    It may be that this case corresponds, as it does in certain IBVP for the linear Schr\"odinger equation~\cite{FPS2022a,Duj2009a}, to INVP whose solutions blow up linearly in time, despite having periodic boundary data.
    We emphasize that it is unsurprising that illposedness of periodic problems like problem~\eqref{eqn:PerNVPwithBC} can occur.
    For example, in the heat equation, an IBVP with a nonzero Neumann conditions should be expected have a solution that blows up in time, rather than behaving periodically.
    
    To investigate more precisely the illposedness would require a careful asymptotic analysis of formula~\eqref{eqn:FullSoln.DtoN.OneNonlocal.Solution.D}; the method of~\S\ref{sec:Periodic} cannot provide any insight on the cause of its own failure, because it is predicated on the existence of a solution to problem~\eqref{eqn:PerNVPwithBC}.
\end{rmk}

\section{Long time asymptotics} \label{sec:LongTime}

In this section, we study the long time behaviour of solutions to problem~\eqref{eqn:INVPwithBC} with either zero or periodic boundary and nonlocal data.
The results are stated in proposition~\ref{prop:LongTime.HomogeneousINVP.LinDec} and theorem~\ref{thm:LongTime.MainResult}.

We begin by studying the following problem, which is a special case of problem~\eqref{eqn:INVPwithBC} with homogeneous boundary and nonlocal conditions.
\begin{prob*}[Homogeneous finite interval problem for the Airy equation with one nonlocal condition]
    \begin{subequations} \label{eqn:HomogINVPwithBC}
    \begin{align}
        \label{eqn:HomogINVPwithBC.PDE} \tag{\theparentequation.PDE}
        [\partial_t+\partial_x^3] v(x,t) &= 0 & (x,t) &\in (0,1) \times (0,\infty), \\
        \label{eqn:HomogINVPwithBC.IC} \tag{\theparentequation.IC}
        v(x,0) &= V(x) & x &\in[0,1], \\
        \label{eqn:HomogINVPwithBC.BC0} \tag{\theparentequation.BC0}
        v(1,t) &= 0 & t &\in[0,\infty), \\
        \label{eqn:HomogINVPwithBC.BC1} \tag{\theparentequation.BC1}
        v_x(1,t) &= 0 & t &\in[0,\infty), \\
        \label{eqn:HomogINVPwithBC.NC} \tag{\theparentequation.NC}
        \int_0^1 K(y)v(y,t)\D y &= 0 & t &\in[0,\infty),
    \end{align}
    in which $V$ and $K$ are sufficiently smooth.
    \end{subequations}
\end{prob*}
This is the same as problem~\eqref{eqn:INVPwithBC}, except that here we insist $h_0=h_1=h_2=0$.
In particular, corollary~\ref{cor:FullSoln.OneNonlocal.Solution.E} applies, with simplifications of the definitions of $N_0$ and $N_1$.
An asymptotic analysis of this formula for $v(x,t)$, presented at the end of this section allows us to prove the following proposition.

\begin{prop} \label{prop:LongTime.HomogeneousINVP.LinDec}
    Suppose $v$ satisfies problem~\eqref{eqn:HomogINVPwithBC} and the criteria of theorem~\ref{thm:FullSoln.OneNonlocal.Solution.D} hold.
    Then, uniformly in $x$, $v(x,t) = \bigoh(t^{-1})$ as $t\to\infty$.
\end{prop}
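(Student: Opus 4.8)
The plan is to start from the Ehrenpreis-free representation in corollary~\ref{cor:FullSoln.OneNonlocal.Solution.E}, specialised to homogeneous data, and extract $t^{-1}$ decay by a stationary-phase/steepest-descent analysis of the two contour integrals over $\partial E_R^+$ and $\partial E_R^-$. With $h_0=h_1=h_2=0$, the data terms $N_0, N_1$ collapse to the terms involving $\widehat U$ alone, so the integrands are, schematically, $\re^{\ri\la x+\ri\la^3 t}$ times rational-in-$\la$ combinations of $\widehat U(\la)$, $\widehat U(\alpha^j\la)$ and the factor $1/\Delta(\la)$. The first step is thus to write out $N_0$ and $N_1$ in the homogeneous case explicitly and confirm that each resulting integrand is, for $\la$ large on the relevant sectors, a product of $\re^{\ri\la x+\ri\la^3 t}$ with a function that is $\bigoh(\la^{-1})$ there (using lemma~\ref{lem:FullSoln.OneNonlocal.ZerosLocus} to control $1/\Delta$ away from its zeros, and integration-by-parts bounds on $\widehat U$ since $V$ is merely piecewise continuous).

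Second, I would split each contour integral into a compact part near $C(0,R)$ and the four semi-infinite rays. On the rays the phase is $\ri\la x+\ri\la^3 t$; on $\partial E^\pm$ one has $\Re(-\ri\la^3)>0$, i.e. $\Re(\ri\la^3)<0$, so $\abs{\re^{\ri\la^3 t}}$ decays exponentially in $t\abs{\la}^3$ along the interior of each ray and the tails contribute $\bigoh$ of any negative power of $t$; the only competition is near the corners $\la=\Theta(R)$ and near the real axis, where $\Re(\ri\la^3)\to0$. So the real work is a neighbourhood of the junction points of the rays with the circle, and of the two rays of $\partial E_R^-$ that lie along $\RR$. There the standard device is to deform so that $\la^3 t$ becomes the natural variable: substituting $\la=(t)^{-1/3}\mu$ rescales the exponential to $\re^{\ri\mu^3}$ on a fixed contour, and pulls out a factor $t^{-1/3}$ from $\D\la$; combined with the $\bigoh(\la^{-1})=\bigoh(t^{1/3}\mu^{-1})$ from the amplitude and the boundedness of $\widehat U$, this yields the claimed $t^{-1}$. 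One must be a little careful that the amplitude's $\la^{-1}$ behaviour is genuinely uniform (this is where the hypotheses that $K$ is of bounded variation and nonzero at $0$ enter, via lemma~\ref{lem:FullSoln.OneNonlocal.ZerosLocus} and the $\Theta(\re^{-\ri\la}/\la)$ asymptotics of $\Delta$ recorded in the proof of lemma~\ref{lem:FullSoln.OneNonlocal.NondataDecays}), and that no spurious polynomial growth survives: the powers of $\la$ in the numerator are at most $2$, but they are multiplied by $\tilde h_j\equiv0$ or by $\widehat U(\alpha^j\la)/\Delta(\la)$, and one checks the latter ratios are $\bigoh(\la^{-1})$ on the sectors in play — exactly the estimates already assembled in the two lemmata.

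The main obstacle I expect is \emph{uniformity in $x$} together with the behaviour near the real axis. Uniformity in $x\in[0,1]$ is not automatic because $\re^{\ri\la x}$ on $\partial E^+$ decays like $\exp(-x\abs\la\sqrt3/2)$ (helpful, and gives extra decay for $x$ bounded away from $0$) but the $x$-dependence must not be allowed to interact badly with the rescaling; one handles this by noting the extra factor is $\bigoh(1)$ uniformly, so it can only help. More delicate is that two components of $\partial E_R^-$ run along $\RR$, where $\Re(\ri\la^3)=0$, so there is genuinely no exponential gain and the $t^{-1}$ must come purely from the oscillatory integral $\int \re^{\ri\la(x-1)+\ri\la^3 t}(\text{amplitude})\,\D\la$ via the $t^{-1/3}$ scaling applied three times' worth — i.e. one needs the amplitude to be $\bigoh(\la^{-1})$ \emph{and} integrable near $\la=0$ after the contour is perturbed off the axis around $C(0,R)$. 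This is precisely the point at which the order-$3$ zero of the numerator at $\la=0$ (engineered in the construction of $N_0$, and mirrored here by the relation $\widehat U(\la)=-N_0+\re^{-\ri\la}[\dots]$ used to pass to corollary~\ref{cor:FullSoln.OneNonlocal.Solution.E}) must be invoked to cancel the $1/(\ri n\omega-\ri\la^3)$-type singularity; getting this cancellation clean, uniformly in $x$, is the crux. Everything else is bookkeeping with Jordan's lemma and the rescaling $\la\mapsto t^{-1/3}\mu$.
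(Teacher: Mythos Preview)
Your plan has a genuine gap rooted in a misreading of the contour geometry. The rays comprising $\partial E_R^\pm$ lie exactly where $\Re(-\ri\la^3)=0$, so $\lvert\re^{\ri\la^3 t}\rvert=1$ there; there is \emph{no} exponential decay in $t$ along any of the semi-infinite pieces of the contour, and your ``tails contribute $\bigoh$ of any negative power of $t$'' is simply false. Consequently the rescaling $\la=t^{-1/3}\mu$ does not map to a fixed contour (in $\mu$-space the contour is $t^{1/3}\partial E_R^\pm$, which expands), and the arithmetic you sketch --- $t^{-1/3}$ from the Jacobian times an amplitude $\bigoh(t^{1/3}\mu^{-1})$ --- gives $t^{0}$, not $t^{-1}$. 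Your later remarks about an order-$3$ zero at $\la=0$ cancelling a ``$1/(\ri n\omega-\ri\la^3)$-type singularity'' are also misplaced: that singularity belongs to the periodic problem for $q$, not to the representation of $v$ in corollary~\ref{cor:FullSoln.OneNonlocal.Solution.E}, and in any case the contour stays at $\lvert\la\rvert\geq R>0$.

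What the paper actually does is much more direct. Since the phase $\la\mapsto\la^3$ has no stationary point on $\partial E_R^\pm$ (its derivative $3\la^2$ is bounded away from zero there), a single integration by parts in $\la$, writing $\re^{\ri\la^3 t}=\frac{1}{3\ri t\la^2}\frac{\D}{\D\la}\re^{\ri\la^3 t}$, pulls out the factor $1/t$ outright. The work is then to show that (i) the boundary terms at $\lvert\la\rvert=b$ vanish as $b\to\infty$ and (ii) the new integrand --- involving $n'(\la)$ and $\frac{\D}{\D\la}\bigl(\re^{-\ri\la}n(\la)-\widehat V(\la)\bigr)$ divided by $\la^2$ --- is absolutely integrable along the rays, uniformly in $x$. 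This requires bounds on $n(\la)$ and $n'(\la)$ of the same flavour as in lemma~\ref{lem:FullSoln.OneNonlocal.NondataDecays}, and that is where the hypotheses on $K$ enter. No stationary-phase expansion or rescaling is needed.
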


Now consider problem~\eqref{eqn:INVPwithBC}, but with all of $h_0,h_1,h_2$ having common period $T$.
We seek an expression for $u(x,t)$ valid in the long time regime.
We use the principle of linear superposition to express $u(x,t) = q(x,t) + v(x,t)$, with $q$ satisfying problem~\eqref{eqn:PerNVPwithBC} and $v$ satisfying problem~\eqref{eqn:HomogINVPwithBC} in which $V(x) = U(x)-q(x,0)$.
By proposition~\ref{prop:LongTime.HomogeneousINVP.LinDec}, $v$ decays.
From proposition~\ref{prop:Periodic.Solution.OneNonlocal}, we can obtain an expression for $q$, hence an asymptotically valid expression for $u$.
This justifies the following theorem.

\begin{thm} \label{thm:LongTime.MainResult}
    Suppose $u$ satisfies problem~\eqref{eqn:INVPwithBC} in which all of $h_0,h_1,h_2$ are generic piecewise continuous functions with common period $T$.
    Suppose that the corresponding homogeneous problem~\eqref{eqn:HomogINVPwithBC} satisfies the criteria of theorem~\ref{thm:FullSoln.OneNonlocal.Solution.D}.
    Suppose that $K$ is such that $\kappa(y)=K(1-y)$ satisfies criterion~\ref{crit:Periodic.OneNonlocal.Solubility}.
    Then
    \[
        u(x,t) = \frac1{2\pi} \int_{-\infty}^\infty \re^{\ri\la x} \sum_{n\in\ZZ} \re^{\ri n \omega t} \frac{\left[ G_n^2 + \ri\la G_n^1 - \la^2 G_n^0 \right] - \re^{-\ri\la}\left[ \Gamma_n^2 + \ri\la H_n^1 - \la^2 H_n^0 \right]}{\ri n \omega - \ri\la^3} \D\la + \bigoh(t^{-1}),
    \]
    where $H_n^j$ are defined in equations~\eqref{eqn:Periodic.Notation}, $\Gamma_n^2$ are defined in equation~\eqref{eqn:OneNonlocal.NonlocalPeriodicRelation.Gamma.n2}, and $G_n^j$ are defined in equations~\eqref{eqn:Periodic.OneNonlocal.G}.
\end{thm}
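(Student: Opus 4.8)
The plan is to prove Theorem~\ref{thm:LongTime.MainResult} by linear superposition, combining Proposition~\ref{prop:Periodic.Solution.OneNonlocal} with Proposition~\ref{prop:LongTime.HomogeneousINVP.LinDec}. First I would let $q$ be the solution of the time periodic problem~\eqref{eqn:PerNVPwithBC} with the same boundary and nonlocal data $h_0,h_1,h_2$; this solution exists and is given by Proposition~\ref{prop:Periodic.Solution.OneNonlocal}, whose hypotheses are met because the $h_k$ are piecewise continuous with common period $T$ and $\kappa(y)=K(1-y)$ satisfies Criterion~\ref{crit:Periodic.OneNonlocal.Solubility}. Then I would set $v(x,t) = u(x,t) - q(x,t)$ and $V(x) = U(x) - q(x,0)$. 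By linearity of the PDE~\eqref{eqn:INVPwithBC.PDE}, $v$ satisfies the same PDE; subtracting the boundary condition~\eqref{eqn:PerNVPwithBC.BC0} from~\eqref{eqn:INVPwithBC.BC0} (and similarly for BC1 and NC) shows $v$ satisfies the homogeneous boundary and nonlocal conditions of problem~\eqref{eqn:HomogINVPwithBC}; and $v(x,0) = u(x,0) - q(x,0) = U(x) - q(x,0) = V(x)$ gives the initial condition. Thus $v$ solves problem~\eqref{eqn:HomogINVPwithBC}.

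Next I would check that the hypotheses of Proposition~\ref{prop:LongTime.HomogeneousINVP.LinDec} hold for this $v$. The proposition requires that $v$ satisfy problem~\eqref{eqn:HomogINVPwithBC} (just verified) and that the criteria of Theorem~\ref{thm:FullSoln.OneNonlocal.Solution.D} hold, which is exactly the hypothesis placed on the corresponding homogeneous problem in the statement of Theorem~\ref{thm:LongTime.MainResult} together with the assumptions that $K$ has bounded total variation and is continuous and nonzero at $0$. One must also confirm that $V$ is piecewise continuous so that $v$ falls within the scope of those results; this follows because $U$ is piecewise continuous by the standing assumptions on problem~\eqref{eqn:INVPwithBC} and $q(\argdot,0)$ is continuous in $x$, being (by the solution formula in Proposition~\ref{prop:Periodic.Solution.OneNonlocal}) a uniformly convergent Fourier-type integral whose integrand decays in $\la$. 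Applying Proposition~\ref{prop:LongTime.HomogeneousINVP.LinDec} then gives $v(x,t) = \bigoh(t^{-1})$ as $t\to\infty$, uniformly in $x$.

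Finally I would write $u(x,t) = q(x,t) + v(x,t)$, substitute the explicit representation of $q$ from Proposition~\ref{prop:Periodic.Solution.OneNonlocal}, and absorb $v(x,t)$ into the error term $\bigoh(t^{-1})$. Since the coefficients $G_n^j$, $\Gamma_n^2$, $H_n^j$ appearing in the periodic representation are computed from the same data $h_0,h_1,h_2$ and the same weight $K$ (via $\kappa$) as in the periodic problem, the formula in the theorem statement is precisely the representation of $q$, and the claimed asymptotic expansion follows immediately.

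The main obstacle is the bookkeeping at the interface of the two propositions: one must make sure the regularity hypotheses transfer correctly, in particular that $V = U - q(\argdot,0)$ inherits enough regularity (piecewise continuity) for Theorem~\ref{thm:FullSoln.OneNonlocal.Solution.D} and hence Proposition~\ref{prop:LongTime.HomogeneousINVP.LinDec} to apply, and that the compatibility conditions implicit in problem~\eqref{eqn:HomogINVPwithBC} are automatically satisfied by this $V$ because $q$ already satisfies the homogeneous boundary and nonlocal conditions at $t=0$. Granting the smoothness of $q$ that Proposition~\ref{prop:Periodic.Solution.OneNonlocal} provides, the argument is otherwise a direct application of superposition and the two cited results, with no delicate estimates required beyond those already established.
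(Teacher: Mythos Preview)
Your proposal is correct and matches the paper's own argument essentially line for line: the paper also writes $u=q+v$ with $q$ solving the periodic problem~\eqref{eqn:PerNVPwithBC} and $v$ solving the homogeneous problem~\eqref{eqn:HomogINVPwithBC} with initial datum $V=U-q(\argdot,0)$, then invokes Proposition~\ref{prop:LongTime.HomogeneousINVP.LinDec} for the decay of $v$ and Proposition~\ref{prop:Periodic.Solution.OneNonlocal} for the explicit form of $q$. Your additional bookkeeping about the regularity of $V$ and the compatibility conditions is more explicit than what the paper records, but the approach is identical.
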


\begin{proof}[Proof of proposition~\ref{prop:LongTime.HomogeneousINVP.LinDec}]
    Let
    \[
        n(\la) = \frac1{\Delta(\la)} \sum_{j=0}^2 \alpha^jN_1(\la)(\alpha^j\la),
    \]
    and observe that, because the boundary and nonlocal conditions are homogeneous,
    \[
        N_1(\la) = \int_0^1K(y)\re^{\ri\la y}\widehat{V}(\la;y,1) \D y;
    \]
    both $N_1$ and $n$ are truly independent of $t'$.
    By corollary~\ref{cor:FullSoln.OneNonlocal.Solution.E}, $2\pi u(x,t) = I^+(t)+I^-(t)$, where we have suppressed the $x$ depenence of $I^\pm$, and
    \begin{align*}
        I^+(t) &=
        \int_{\partial E_R^+} \re^{\ri\la x+\ri\la^3 t} \left( \re^{-\ri\la}n(\la) - \widehat{V}(\la) \right) \D\la, \\
        I^-(t) &=
        \int_{\partial E_R^-} \re^{\ri\la (x-1)+\ri\la^3 t} n(\la) \D\la.
    \end{align*}
    We will analyse each of $I^\pm$ separately.
    
    Interpreting, as we must due to its derivation from equation~\eqref{eqn:PreEhrenpreisForm}, $I^-(t)$ as a Cauchy principal value contour integral, and integrating by parts,
    \[
        I^-(t) =
        \frac1{3\ri t} \lim_{b\to+\infty} \left\{
        \left[ \re^{\ri\la^3t}\frac{\re^{\ri\la(x-1)}n(\la)}{\la^2} \right]^{\la=\alpha^{2}b}_{\la=\alpha^{-1/2}b}
        - \int_{\Gamma_b^-} \re^{\ri\la^3t} \frac{\D}{\D\la} \left[ \frac{\re^{\ri\la(x-1)}n(\la)}{\la^2} \right] \D\la
        \right\},
    \]
    where $\Gamma_b^-=\partial E_R^- \cap B(0,b)$.
    Because $\alpha^2b,\alpha^{-1/2}b\in\clos(D_R^-)$, lemma~\ref{lem:FullSoln.OneNonlocal.NondataDecays} guarantees that $n(\la)=\bigoh(b^{-1})$.
    Along the rays $\alpha^2\RR^+$ and $\alpha^{-1/2}\RR^+$, $\abs{\re^{\ri\la^3t}}=1$ and $\abs{\re^{\ri\la(x-1)}}\leq1$, so the decay of both boundary terms as $b\to+\infty$ is justified:
    \begin{equation} \label{eqn:LongTime.HomogeneousINVP.LinDec.Proof.1}
        I^-(t) =
        \frac1{3\ri t} \lim_{b\to+\infty} \left\{
        0
        - \int_{\Gamma_b^-} \re^{\ri\la^3t} \frac{\D}{\D\la} \left[ \frac{\re^{\ri\la(x-1)}n(\la)}{\la^2} \right] \D\la
        \right\}.
    \end{equation}
    Differentiating,
    \[
        \re^{\ri\la^3t} \frac{\D}{\D\la} \left[ \frac{\re^{\ri\la(x-1)}n(\la)}{\la^2} \right]
        =
        \frac1{\la^2}
        \times
        \re^{\ri\la^3t+\ri\la(x-1)}
        \times
        \left[ \left(\ri(x-1)-\frac2\la\right) n(\la) + n'(\la) \right].
    \]
    The exponential factor is, uniformly in $x,t$, bounded in modulus by $1$, and $n(\la)$ decays as $\la\to\infty$ along $\partial E_R^\pm$.
    We shall argue that $n'(\la)$ is also bounded (in fact decaying), whence dominated convergence gives a bound on the $b\to+\infty$ limit in equation~\eqref{eqn:LongTime.HomogeneousINVP.LinDec.Proof.1} that is uniform in both $x$ and $t$.
    We calculate
    \[
        n'(\la) = \frac{1}{\Delta(\la)} \sum_{j=0}^2 \alpha^{2j} N'_1(\alpha^j\la) - \frac{\Delta'(\la)}{[\Delta(\la)]^2} \sum_{j=0}^2 \alpha^{2j} N_1(\alpha^j\la),
    \]
    where
    \[
        \Delta'(\la) = -\ri \sum_{j=0}^2 \alpha^{2j} \int_0^1 \re^{-\ri\la\alpha^jy}y\kappa(y)\D y = \bigoh(\Delta(\la)),
    \]
    and
    \[
        N'_1(\alpha^j\la) = -\ri\alpha^j \int_0^1 K(y)\re^{\ri\alpha^j\la y} \int_y^1 \re^{-\ri\alpha^j\la z} (y-z) V(z) \D z \D y = \bigoh\left(\la^{-1}\Delta(\la)\right),
    \]
    as $\la\to\infty$ along the contour $\partial E_R^\pm$, hence $n'(\la)$ also decays.
    This shows that $I^-(t)=\bigoh(t^{-1})$, uniformly in $x$, as $t\to\infty$.
    
    The argument for $I^+$ follows the same structure as that for $I^-$.
    Integrating by parts,
    \begin{multline*}
        I^+(t) =
        \frac1{3\ri t} \lim_{b\to+\infty} \left\{
        \left[ \re^{\ri\la^3t+\ri\la x}\frac{\re^{-\ri\la}n(\la)-\widehat{V}(\la)}{\la^2} \right]^{\la=-b,\,\alpha^{1/2}b}_{\la=\alpha b,\,b} \right. \\
        \left. - \int_{\Gamma_b^+} \re^{\ri\la^3t} \frac{\D}{\D\la} \left[ \re^{\ri\la x} \frac{\re^{-\ri\la}n(\la)-\widehat{V}(\la)}{\la^2} \right] \D\la
        \vphantom{\left[ \re^{\ri\la^3t+\ri\la x}\frac{\re^{-\ri\la}n(\la)-\widehat{V}(\la)}{\la^2} \right]^{\la=-b,\,\alpha^{1/2}b}_{\la=\alpha b,\,b}}
        \right\},
    \end{multline*}
    where $\Gamma_b^+=\partial E_R^+ \cap B(0,b)$.
    By lemma~\ref{lem:FullSoln.OneNonlocal.NondataDecays},
    \[
        \re^{-\ri\la}n(\la)-\widehat{V}(\la) = \bigoh\left(1\right),
    \]
    as $\la\to\infty$ along $\partial D_R^+$, and the same argument applies to the limit as $\la\to\infty$ along $\RR$.
    Hence, this numerator is bounded on $\partial E_R^+$,
    and the boundary terms decay in the limit $b\to+\infty$;
    \begin{equation}
        \label{eqn:LongTime.HomogeneousINVP.LinDec.Proof.2}
        I^+(t) =
        \frac1{3\ri t} \lim_{b\to+\infty} \left\{
        0
        - \int_{\Gamma_b^+} \re^{\ri\la^3t} \frac{\D}{\D\la} \left[ \re^{\ri\la x} \frac{\re^{-\ri\la}n(\la)-\widehat{V}(\la)}{\la^2} \right] \D\la
        \vphantom{\left[ \re^{\ri\la^3t+\ri\la x}\frac{\re^{-\ri\la}n(\la)-\widehat{V}(\la)}{\la^2} \right]^{\la=-b,\,\alpha^{1/2}b}_{\la=\alpha b,\,b}}
        \right\}.
    \end{equation}
    
    The remaining integrand can be reexpressed as
    \begin{equation}
        \label{eqn:LongTime.HomogeneousINVP.LinDec.Proof.3}
        \frac{1}{\la^2} \times \re^{\ri\la^3t+\ri\la x} \times \left[ \left(\ri x-\frac{2}{\la}\right)\left( \re^{-\ri\la}n(\la)-\widehat{V}(\la) \right) + \frac{\D}{\D\la}\left( \re^{-\ri\la}n(\la)-\widehat{V}(\la) \right) \right],
    \end{equation}
    and the first term in the bracket, as above, is $\bigoh(1)$.
    To obtain a bound on the derivative, we apply a similar reexpression to that used in the proof of lemma~\ref{lem:FullSoln.OneNonlocal.NondataDecays}, but keep the lower order terms.
    Because
    \[
        \frac{\re^{-\ri\la}}{\Delta(\la)}\int_0^1K(y)\re^{\ri\la y}\D y = 1 - \frac{1}{\Delta(\la)} \sum_{j=1}^2 \alpha^j\hat\kappa(\alpha^j\la),
    \]
    we can rewrite
    \begin{multline*}
        \re^{-\ri\la}n(\la)-\widehat{V}(\la) \\
        = \frac{\re^{-\ri\la}}{\Delta(\la)} \left( \sum_{j=1}^2 \alpha^j\int_0^1K(y) \int_y^1 \re^{-\ri\alpha^j\la (z-y)} V(z) \D z \D y - \int_0^1 K(y) \int_0^y \re^{\ri\la (y-z)} V(z) \D z \D y \right) \\
        + \frac{\widehat{V}(\la)}{\Delta(\la)} \sum_{j=1}^2 \alpha^j \int_0^1\re^{-\ri\alpha^j\la(1-y)}K(y) \D y.
    \end{multline*}
    The derivative of this quantity is
    \begin{multline*}
        \frac{-\re^{-\ri\la}\left( \ri+\frac{\Delta'(\la)}{\Delta(\la)} \right)}{\Delta(\la)} \left( \sum_{j=1}^2 \alpha^j\int_0^1K(y) \int_y^1 \re^{-\ri\alpha^j\la (z-y)} V(z) \D z \D y - \int_0^1 K(y) \int_0^y \re^{\ri\la (y-z)} V(z) \D z \D y \right) \\
        + \frac{\ri\re^{-\ri\la}}{\Delta(\la)} \left( \sum_{j=1}^2 \alpha^{2j}\int_0^1K(y) \int_y^1 \re^{-\ri\alpha^j\la (z-y)} (y-z) V(z) \D z \D y - \int_0^1 K(y) \int_0^y \re^{\ri\la (y-z)} (y-z) V(z) \D z \D y \right) \\
        - \frac{\ri}{\Delta(\la)} \sum_{j=1}^2 \alpha^{2j} \int_0^1\re^{-\ri\alpha^j\la(1-y)}(1-y)K(y) \D y
        + \frac{\sum_{j=1}^2 \alpha^j \hat\kappa(\alpha^j\la)}{[\Delta(\la)]^2} \left( \Delta'(\la)\widehat{V}(\la)-\Delta(\la)\frac{\D}{\D\la}\widehat{V}(\la) \right).
    \end{multline*}
    Similar arguments to those presented in lemma~\ref{lem:FullSoln.OneNonlocal.NondataDecays} justify that this quantity is $\bigoh(1)$ as $\la\to\infty$ along the contours of interest.
    Therefore, the bracketed factor in expression~\eqref{eqn:LongTime.HomogeneousINVP.LinDec.Proof.3} is $\bigoh(1)$.
    Because the exponential in that expression is, uniformly in $x,t$, bounded in modulus, dominated convergence establishes a uniform bound on the remaining integral on the right of equation~\eqref{eqn:LongTime.HomogeneousINVP.LinDec.Proof.2}.
    Hence $I^+(t)=\bigoh(t^{-1})$, uniformly in $x$, as $t\to\infty$.
\end{proof}

\begin{rmk}
    In proposition~\ref{prop:LongTime.HomogeneousINVP.LinDec}, we have not attempted to obtain the leading order behaviour of $v(x,t)$ in the long time limit, only that it decays at least linearly.
    It may even be that further integration by parts reveals uniform quadratic or faster decay of $v$.
    The arguments presented above could be extended to investigate these questions.
\end{rmk}

\begin{rmk}
    One may extend this linear superposition argument to study the case in which the boundary and nonlocal data are not periodic but only asymptotically periodic.
    In this case, the problem for $q$ need not be altered, but the problem for $v$ would have all $h_k$ decaying instead of identically zero.
    How fast must the data decay to their perodic limit to obtain decay of the corresponding solution $v$?
    Questions such as this require a generalisation of proposition~\ref{prop:LongTime.HomogeneousINVP.LinDec} to study the extra terms.
\end{rmk}

\section*{Acknowledgement}

\AckYNCSRP{B. Normatov}
\AckYNCSeed{D. A. Smith}

\bibliographystyle{amsplain}
{\small\bibliography{dbrefs}}

\end{document}